\documentclass[12pt,a4paper]{amsart}
\usepackage[utf8]{inputenc}
\usepackage[T1]{fontenc}
\usepackage{mathtools}
\usepackage{amsmath}
\usepackage{amsthm}
\usepackage{amssymb}
\usepackage[abbrev]{amsrefs}
\usepackage{mathrsfs}
\usepackage[dvipsnames]{xcolor}
\usepackage{bm}
\usepackage{enumitem}
\usepackage{hyperref}
\usepackage{graphicx}
\usepackage[all]{xy}
\usepackage{ifthen}
\AtBeginDocument{\def\MR#1{}}
\makeatletter
\@namedef{subjclassname@2020}{%
\textup{2020} Mathematics Subject Classification}
\makeatother
\numberwithin{equation}{section}
\allowdisplaybreaks
\newtheorem{Question}{}

\newtheorem{question}[Question]{Question}

\newtheorem{thm}{}[section]
\newtheorem{theorem}[thm]{Theorem}
\newtheorem{corollary}[thm]{Corollary}
\newtheorem{lemma}[thm]{Lemma}
\newtheorem{proposition}[thm]{Proposition}
\theoremstyle{definition}

\newcommand{\ee}{\ensuremath{\bm{e}}}%
\newcommand{\XX}{\ensuremath{\mathbb{X}}}%
\newcommand{\NN}{\ensuremath{\mathbb{N}}}%
\newcommand{\FF}{\ensuremath{\mathbb{F}}}%
\newcommand{\ZZ}{\ensuremath{\mathbb{Z}}}%
\newcommand{\YY}{\ensuremath{\mathbb{Y}}}%
\newcommand{\Ft}{\ensuremath{\mathcal{F}}}%
\newcommand{\Gt}{\ensuremath{\mathcal{G}}}%
\newcommand{\Pt}{\ensuremath{\mathcal{P}}}%
\newcommand{\St}{\ensuremath{\mathcal{S}}}%
\newcommand{\Sym}{\ensuremath{\mathbb{S}}}%
\newcommand{\Dt}{\ensuremath{\mathcal{D}}}%
\newcommand{\Jt}{\ensuremath{\mathcal{J}}}%
\newcommand{\At}{\ensuremath{\mathcal{A}}}%
\newcommand{\Bt}{\ensuremath{\mathcal{B}}}%
\newcommand{\YB}{\ensuremath{\mathcal{Y}}}%
\newcommand{\XB}{\ensuremath{\mathcal{X}}}%
\newcommand{\Et}{\ensuremath{\mathcal{E}}}%
\newcommand{\st}{\ensuremath{\bm{s}}}%
\newcommand{\yy}{\ensuremath{\bm{y}}}%
\newcommand{\xx}{\ensuremath{\bm{x}}}%
\newcommand{\zz}{\ensuremath{\bm{z}}}%
\newcommand{\unc}{\ensuremath{{\bm{k}}}}%
\newcommand{\uncr}{\ensuremath{{\widetilde{\bm{k}}}}}%
\newcommand{\ww}{\ensuremath{{\bm{w}}}}%
\newcommand{\primv}{\ensuremath{{\bm{\upsilon}}}}%
\newcommand{\primp}{\ensuremath{{\bm{\pi}}}}%
\newcommand{\prim}{\ensuremath{{\bm{\sigma}}}}%
\newcommand{\primt}{\ensuremath{{\bm{\tau}}}}%
\DeclareMathOperator{\supp}{supp}%
\DeclareMathOperator{\spn}{spn}%
\newcommand{\abs}[1]{\left\lvert#1\right\rvert}
\newcommand{\norm}[1]{\left\lVert#1\right\rVert}
\newcommand{\enfloor}[1]{\left\lfloor#1\right\rfloor}

\newcommand{\enbrace}[1]{\left\lbrace#1\right\rbrace}

\newcommand{\enpar}[1]{\left(#1\right)}
\newcommand{\Env}[2][]{\ifthenelse{ \equal{#1}{}}{\ensuremath{#2_{\mathsf{c}}}}{\ensuremath{#2_{\mathsf{c},#1}}}}
\author[F. Albiac]{Fernando Albiac}
\address{Fernando Albiac\\
Department of Mathematics, Statistics, and Computer Sciences--Ina\-Mat$^2$ \\
Universidad P\'ublica de Navarra\\
Campus de Arrosad\'{i}a\\
Pamplona\\
31006 Spain}
\email{fernando.albiac@unavarra.es}
\author[J. L. Ansorena]{Jos\'e L. Ansorena}
\address{Jos\'e L. Ansorena\\
Department of Mathematics and Computer Sciences\\
Universidad de La Rioja\\
Logro\~no\\
26004 Spain}
\email{joseluis.ansorena@unirioja.es}
\author[M. Berasategui]{Miguel Berasategui}
\address{Miguel Berasategui\\
UBA - Pab I, Facultad de Ciencias Exactas y Naturales\\
Universidad de Buenos Aires\\
(1428), Buenos Aires, Argentina}
\email{mberasategui@dm.uba.ar}
\subjclass[2020]{46A16, 46A35, 46A45, 46B15, 46B45}
\keywords{Quasi-Banach spaces, averaging in Banach spaces, symmetric bases, Lorentz spaces, conditional bases, almost greedy bases, order estimates, Banach lattices, unconditonality constants}
\begin{document}
%--------------------------------------------
\title[Averaging projections in quasi-Banach spaces]{Boundedness of the averaging projections in nonlocally convex Lorentz sequence spaces and applications to basis theory}
%--------------------------------------------
\begin{abstract}
We study the boundedness of averaging projections associated with symmetric Schauder bases in quasi-Banach spaces. Although this property is standard in the Banach setting, it is far from clear in the absence of local convexity and, indeed, fails for a broad class of quasi-Banach spaces with a symmetric basis, including $\ell_p$ for $0<p<1$. Our main result shows that, nevertheless, the canonical basis of an entire class of weighted Lorentz sequence spaces, including the spaces $\ell_{p,q}$ for $0<q<1<p<\infty$, has uniformly bounded averaging projections. Thus bounded averaging projections do not characterize local convexity among quasi-Banach spaces with symmetric bases. As applications, we obtain new consequences for the structure of special bases. In particular, as a byproduct of our approach, we derive new examples of conditional and almost greedy bases in nonlocally convex spaces.
\end{abstract}
%--------------------------------------------
\thanks{F.\@ Albiac and J.\@ L.\@ Ansorena acknowledge the support of the Spanish Ministry for Science and Innovation under Grant PID2022-138342NB-I00 for \emph{Functional Analysis Techniques in Approximation Theory and Applications}}
%--------------------------------------------------------------------------
\maketitle
%--------------------------------------------
\section{Introduction}\noindent
%--------------------------------------------
Averaging operators are among the most natural transformations associated with a basis. Recall that given a Schauder basis $\XB=(\xx_n)_{n=1}^\infty$ of a Banach (or more generally, a quasi-Banach) space $\XX$, and a family $\At=(A_j)_{j\in J}$ of pairwise disjoint nonempty finite subsets of $\NN$, the \emph{averaging projection on $\XX$ relative to $\XB$ and $\At$} is the linear map that assigns to each $f=\sum_{n=1}^\infty a_n \xx_n\in \XX$ the vector
\[
V_{\At}[\XB](f)
=\sum_{j\in J} \frac{1}{\abs{A_j}} \enpar{\sum_{n\in A_j} a_n} \enpar{\sum_{n\in A_j} \xx_n}\in\XX.
\]
When the basis $\XB$ is symmetric, the boundedness of these operators expresses a strong form of stability under blockwise averaging. In Banach spaces, averaging projections occur naturally in the geometry of symmetric basic sequences, in the study of complemented block subspaces, and in the construction of special kinds of conditional bases. However, the standard argument that yields the boundedness of averaging projections relative to symmetric Schauder bases (see, e.g., \cite{AlbiacKalton2016}*{Theorem 9.2.6}) relies heavily on local convexity. Thus, once one passes from Banach spaces to quasi-Banach spaces, it is no longer clear whether boundedness of averaging projections should remain available.

This is not merely a technical issue. Indeed, in nonlocally convex spaces, averaging need not behave as a regularizing operation. On the contrary, quasi-norms often react badly to the spreading of mass, so one cannot expect blockwise averaging to be automatically bounded. This leads to the following natural question.

\begin{question}\label{qt:LNCAP}
Is there a nonlocally convex quasi-Banach space with a symmetric Schauder basis relative to which all averaging projections are bounded?
\end{question}

There are solid reasons to suspect that the compatibility of the quasi-norm with the boundedness of averaging projections might force local convexity. Kalton \cite{Kalton1984ArchMath} proved that for vector-valued $L_p(\XX)$, $0<p<\infty$, the existence of an averaging projection onto $\XX$ implies local convexity of $\XX$ under mild additional assumptions, and he conjectured that if such averaging projection exists, then $1\le p<\infty$ and $\XX$ is locally convex (see \cite{Kalton1984ArchMath}*{p.\@ 73}). Even though that problem lives in a function-space setting, it helped shape the intuition that bounded averaging is too rigid to occur in a genuinely nonlocally convex environment.

Altogether, these facts suggest that bounded averaging relative to bases should be a rather restrictive feature outside the locally convex setting. The purpose of this paper is to show that this intuition is false.

Hence, the boundedness of the averaging projections does not characterize local convexity and it should be viewed instead as an intrinsic geometric property of rearrangement invariant quasi-Banach spaces in its own right, rather than as a proxy for local convexity. The present work provides evidence that, in the quasi-Banach setting, the geometry of the space can compensate for the lack of convexity in ways that had not been previously recognized. The result is especially striking in view of the obstructions that we will exhibit in detail in Section~\ref{prelim}. In particular, if $\XB$ is a symmetric Schauder basis and all averaging projections relative to $\XB$ are bounded, then the fundamental function of the basis behaves like the fundamental function of a basis of a Banach space.

Lorentz sequence spaces $\ell_{p,q}$, $0<p<\infty$, $0<q\le \infty$, provide a natural testing ground for Question~\ref{qt:LNCAP}. Their canonical basis is symmetric, and its fundamental function grows like the sequence
\[
\primp_p=(m^{1/p})_{m=1}^\infty,
\]
exactly as in the classical Banach space $\ell_p$ when $p\ge 1$. Thus, the spaces $\ell_{p,q}$ for $p\ge 1$ are plausible candidates for carrying bounded averaging projections even in the absence of local convexity. At the same time, other obstructions exclude the parameter ranges $p=1$ and $q\not=1$. If $p>1$ and $q>1$, then $\ell_{p,q}$ is locally convex. Thus the only genuinely non-Banach range left open is $0<q<1<p<\infty$. As we will see, this is precisely the non-locally convex regime in which averaging projections are bounded. In fact, we prove a more general boundedness theorem for weighted Lorentz spaces $d_q(\prim)$ associated with an increasing sequence $\prim$ and an index $0<q<1$. The key point is that suitable regularity assumptions on $\prim$ allow one to control the Lorentz norm of the averages despite the non-locally convex nature of the $\ell_q$-norm. The result for classical Lorentz spaces $\ell_{p,q}$ follows by applying this general result to the sequence $\primp_p$.

Beyond its intrinsic interest, the boundedness of averaging projections has applications to the structure of bases in quasi-Banach spaces. In the Banach space setting, a classical theorem of Pe{\l}czy\'nski and Singer \cite{PelSin1964} asserts that every Banach space with a Schauder basis admits a continuum of mutually non-equivalent conditional Schauder bases. However, this result gives little information about the geometry of such bases. In order to construct conditional bases with additional structure, one typically needs extra geometric information on the ambient space. In this direction, the existence of a complemented subspace with a symmetric Schauder basis is especially useful. For instance, the Dilworth--Kalton--Kutzarova method invented in \cite{DKK2003} can be exploited to construct almost greedy bases with prescribed growth of their unconditionality parameters (see \cite{AADK2019b}) in some Banach spaces.

In the nonlocally convex setting, even the validity of the Pe{\l}czy\'nski--Singer theorem is unclear, since its proof depends on the Hahn--Banach theorem. Thanks to the boundedness of the averaging projections and a further extension of the Dilworth--Kalton--Kutzarova method, the mechanism based on complemented symmetric subspaces remains available in certain non-locally convex spaces, and this leads to new examples of special types of conditional bases with prescribed growth of their unconditionality parameters in that setting.

The paper is organized as follows. In Section~\ref{prelim}, we collect the preliminary results on quasi-Banach spaces, bases, atomic lattices, and fundamental functions that will be used throughout the paper. In Section~\ref{sect:RP}, we study weighted Lorentz sequence spaces and prove the boundedness of averaging projections under suitable regularity assumptions on the weight. In Section~\ref{sect:TGA}, we turn to applications to the structure of bases, in particular to complemented block sequences and to the construction of conditional and almost greedy bases in nonlocally convex settings.

Throughout this article we will use the standard notation and language in modern Banach space theory and in particular in the theory of bases, as can be found in \cite{AlbiacKalton2016}. However, for the convenience of the reader we will next recall the most heavily used terminology.
%--------------------------------
\section{Preliminaries and structural obstructions}\label{prelim}\noindent
%--------------------------------
In this section we gather the notation and background material needed later. We review basic terminology on quasi-Banach spaces and their bases, as well as a few structural facts concerning symmetric bases with bounded averaging projections.
%--------------------------------
\subsection{Quasi-Banach spaces, bases, and lattices}
%--------------------------------
Let $0<q\le 1$. A topological vector space is said to be \emph{locally $q$-convex} if it admits a neighborhood basis of the origin consisting of $q$-convex sets. A quasi-norm on a vector space $\XX$ over the (real or complex) field $\FF$ is a map
\[
\norm{\cdot}\colon \XX\to [0,\infty)
\]
such that $\norm{f}>0$ unless $f=0$, $\norm{\lambda f}=\abs{\lambda}\norm{f}$ for all $\lambda\in\FF$ and all $f\in \XX$, and
\[
\norm{f+g}\le \kappa\enpar{\norm{f}+\norm{g}}, \qquad f,g\in \XX,
\]
for some constant $\kappa$. If, more strongly,
\[
\norm{f+g}^q\le \norm{f}^q+\norm{g}^q, \qquad f,g\in \XX,
\]
then $\norm{\cdot}$ is called a $q$-norm. By the Aoki--Rolewicz theorem \cites{Aoki1942,Rolewicz1957}, every locally bounded topological vector space is locally $q$-convex for some $q\le 1$, whence it can be equipped with a $q$-norm. A quasi-Banach space (resp., $q$-Banach or Banach) is a complete topological vector space equipped with a quasi-norm (resp., $q$-norm or norm).

A subspace $\YY$ of a quasi-Banach space $\XX$ is said to be \emph{complemented} if there exists a bounded projection from $\XX$ onto $\YY$.

Given two quasi-Banach spaces $\XX$ and $\YY$, we write $\XX\oplus \YY$ for their direct sum. Given Schauder bases $\XB=(\xx_n)_{n=1}^\infty$ and $\YB=(\yy_n)_{n=1}^\infty$ of quasi-Banach spaces $\XX$ and $\YY$, respectively, $\XB\oplus\YB=(\zz_n)_{n=1}^\infty$ will be the Schauder basis of $\XX\oplus\YY$ defined by
\[
\zz_{2n-1}=(\xx_n,0), \quad \zz_{2n}=(0,\yy_n), \quad n\in\NN.
\]

A family $(f_j)_{j\in J}$ in a quasi-Banach space $\XX$ is said to be \emph{semi-normalized} if
\[
0<\inf_{j\in J}\norm{f_j}\le \sup_{j\in J}\norm{f_j}<\infty.
\]

Let $\XB=(\xx_n)_{n=1}^\infty$ be a sequence in a quasi-Banach space $\XX$. We say that $\XB$ is a \emph{Schauder basis} if every $f\in \XX$ admits a unique expansion
\[
f=\sum_{n=1}^\infty a_n \xx_n.
\]
If the series converge unconditionally, we say that $\XB$ is an \emph{unconditional basis}. The \emph{dual basis} $\XB^*=(\xx_n^*)_{n=1}^\infty$ of a Schauder basis $\XB=(\xx_n)_{n=1}^\infty$ of a quasi-Banach space $\XX$ is the sequence in $\XX^*$ defined for each $k\in\NN$ by
\[
\xx_k^*\colon\XX\to \FF, \quad \sum_{n=1}^\infty a_n\, \xx_n \mapsto a_k.
\]
A \emph{block basic sequence} relative to $\XB$ is a sequence $(f_k)_{k=1}^\infty$ such that there are scalars $(a_n)_{n=1}^\infty$ and integers
\[
0=m_0<m_1<m_2<\cdots
\]
with
\[
f_k=\sum_{n=m_{k-1}+1}^{m_k} a_n \, \xx_n \not=0,\qquad k\in\NN.
\]

Set $c_{00}=c_{00}(\NN)$. Given $\lambda=(\lambda_n)_{n=1}^\infty\in c_{00}$ and a Schauder basis $\XB=(\xx_n)_{n=1}^\infty$ of $\XX$, we define the corresponding \emph{multiplier operator}
\[
M_\lambda[\XB]\colon \XX\to \XX,\qquad \sum_{n=1}^\infty a_n\xx_n\mapsto \sum_{n=1}^\infty \lambda_n a_n\xx_n.
\]
If $\lambda=\chi_A$ is the indicator function of a finite set $A\subset \NN$, we write
\[
S_A[\XB]=M_{\chi_A}[\XB].
\]
As in the locally convex setting, the following are equivalent:
\begin{itemize}
\item $\XB$ is unconditional;
\item the family $M_\lambda[\XB]$, $\lambda\in c_{00}\cap B_{\ell_\infty}$, is uniformly bounded;
\item the family $S_A[\XB]$, $A\subset \NN$, $\abs{A}<\infty$, is uniformly bounded.
\end{itemize}

Let $\Pi(\NN)$ denote the set of permutations of $\NN$. Given $\pi\in \Pi(\NN)$, we denote by
\[
R_\pi[\XB]:\operatorname{span}(\XB)\to \operatorname{span}(\XB)
\]
the linear map defined by $\xx_n\mapsto \xx_{\pi(n)}$. A Schauder basis $\XB$ is said to be \emph{symmetric} if it is equivalent to all its permutations. We say that $\XB$ is \emph{subsymmetric} if it is unconditional and equivalent to all its subsequences.
Equivalently, $\XB$ is symmetric if and only if the family
\[
R_\pi[\XB]\circ M_\lambda[\XB], \qquad \lambda\in c_{00}\cap B_{\ell_\infty},\ \pi\in\Pi(\NN),
\]
is uniformly bounded. Any symmetric basis is subsymmetric.

The support of $f\in \XX$ relative to $\XB$ is the set
\[
\operatorname{supp}(f)=\{n\in\NN\colon \xx_n^*(f)\ne 0\},
\]
where $\XB^*=(x_n^*)_{n=1}^\infty$ denotes the dual basis.

The unconditionality parameters of $\XB$ are defined by
\[
\unc_m[\XB]=\sup_{\abs{A}\le m}\norm{S_A[\XB]},\qquad m\in\NN.
\]
We also consider the localized parameters
\[
\uncr_m[\XB]
=\sup\enbrace{\norm{S_A[\XB](f)} \colon \norm{f}=1,\ \supp(f)\subset [1,m],\ A\subset \NN}.
\]
Then $\uncr_m[\XB]\le \unc_m[\XB]$ for all $m$. Still, $\XB$ is unconditional if and only if $\sup_m \uncr_m[\XB]<\infty$.

A finite set $A\subset \NN$ is called a \emph{greedy set} of $f=\sum_{n=1}^\infty a_n\xx_n$ relative to $\XB$ if
\[
\abs{a_n}\ge \abs{a_k}, \qquad n\in A,\ k\in \NN\setminus A.
\]
If $\XB$ is semi-normalized, we say that it is \emph{almost greedy} if there exists $C\ge 1$ such that
\[
\norm{f-S_A[\XB](f)}
\le C \norm{f-S_B[\XB](f)}
\]
for every $f\in \XX$, every greedy set $A$ of $f$, and every $B\subset \NN$ with $\abs{B}\le \abs{A}$.

An \emph{atomic quasi-Banach lattice} is a quasi-Banach space $\XX$ such that $c_{00}\subset \XX\subset \FF^{\NN}$ and $\norm{g}\le\norm{f}$ for all $f$, $g\in\XX$ with $\abs{g}\le \abs{f}$. If the unit vector system spans a dense subspace of $\XX$, we say that $\XX$ is \emph{minimal}. If
\[
\norm{(f_{\pi(n)})_{n=1}^\infty}=\norm{f}
\]
for all $f\in \XX$ and $\pi\in\Pi(\NN)$, then $\XX$ is said to be \emph{rearrangement invariant}.

The usual correspondence between unconditional bases and atomic quasi-Banach lattices (regarded as sequence spaces with the canonical basis $(\ee_n)_{n=1}^\infty$) allows us to transfer lattice properties to bases, and conversely. Via this correspondence, symmetric bases induce rearrangement invariant quasi-Banach lattice structures.

Let $\XX$ be an atomic quasi-Banach lattice or, more generally, a quasi-Banach lattice. Given $0<r<\infty$ and a finite family $\varphi=(f_j)_{j\in J}$ in $\XX$, define
\[
M_r(\varphi)=\enpar{\norm{\sum_{j\in J}\abs{f_j}^r}^{1/r}},
\qquad
N_r(\varphi)=\enpar{\sum_{j\in J}\norm{f_j}^r}^{1/r}.
\]
We say that $\XX$ is \emph{lattice $r$-convex} if
\[
M_r(\varphi)\le C\,N_r(\varphi),
\]
and \emph{lattice $r$-concave} if
\[
N_r(\varphi)\le C\,M_r(\varphi)
\]
for every finite family $\varphi$. If these inequalities are required to hold only for disjointly supported families, then one speaks of an \emph{upper $r$-estimate} and a \emph{lower $r$-estimate}, respectively. If $\XX$ is a $r$-convex quasi-Banach lattice, then $\XX$ is a $\min\{r,1\}$-convex quasi-Banach space. The converse does not hold; indeed there are quasi-Banach lattices that are not $r$-convex for any $r>0$ (see \cite{Kalton1984b}). We say that $\XX$ if $L$-convex (resp., $L$-concave) if it is lattice $r$-convex (resp., $r$-concave) for some $0<r<\infty$. A quasi-Banach lattice is $L$-concave if and only if is $L$-convex and satisfies a lower $r$-estimate \cite{Kalton1984b}.

Finally, given $0<p<\infty$, the $p$-convexification of $\XX$ is the quasi-Banach lattice
\[
\XX^{(p)}=\{f\colon \abs{f}^p\in \XX\}.
\]
The lattice estimates defined above plainly pass from $\XX$ to $\XX^{(p)}$ by replacing the index $r$ with $pr$.

Given families $\lambda=(\lambda_j)_{j\in J}$ and $\mu=(\mu_j)_{j\in J}$ in $[0,\infty]$, we say that $\mu$ dominates $\lambda$, and we put $\lambda_j\lesssim \mu_j$ for $j\in J$, or $\lambda\lesssim\mu$, if there is a constant $C$ such that $\lambda_j\le C\mu_j$ for all $j\in J$. If $\lambda$ dominates $\mu$ and $\mu$ dominates $\lambda$, we say that $\lambda$ and $\mu$ are equivalent, and we put $\lambda_j\approx \mu_j$ for $j\in J$, or simply $\lambda\approx\mu$.

Let $J$ be a non-empty set, $\Ft=(f_j)_{j\in J}$ a family in $\XX$, and $C(\Ft)$ the family consisting of the quasi-norms of all linear combinations of vectors in $\Ft$, that is,
\[
C(\Ft)= \enpar{ \norm{\sum_{j\in J} a_j \, f_j}}_{(a_j)_{j\in J} \in c_{00}(J)},
\]
where $c_{00}(J)$ stands for the set of all vectors in $\FF^J$ with at most a finite number of nonzero coordinates. Let $\Gt=(g_j)_{j\in J}$ be another family in $\XX$ modeled over the same set $J$. We say that $\Gt$ \emph{dominates} (resp., \emph{is equivalent to}) $\Ft$ if $C(\Gt)$ dominates (resp., is equivalent to) to $C(\Ft)$. In other words, $\Gt$ dominates (resp., is equivalent to) $\Ft$ if and only if there is a linear continuous map (resp., an isomorphism)
\[
T\colon \spn(\Ft) \to \spn(\Gt)
\]
such that $T(f_j)=g_j$ for all $j\in J$.
%-----------------------------
\subsection{Fundamental functions, Banach envelopes, and averaging projections}
%-----------------------------
Let $\XB=(\xx_n)_{n=1}^\infty$ be a Schauder basis of a quasi-Banach space $\XX$. Its \emph{fundamental function} is defined by
\[
\Gamma_m[\XB]=\sup_{\substack{\abs{A}\le m\\\abs{\varepsilon_n}=1}}\norm{\sum_{n\in A}\varepsilon_n \xx_n}, \quad m\in\NN.
\]
If $\XX$ is an atomic quasi-Banach lattice, we will denote by $(\Gamma_m[\XX])_{m=1}^\infty$ the fundamental function of its canonical basis. If the basis $\XB$ is symmetric, then it is super-democratic, in the sense that
\[
\norm{\sum_{n\in A}\varepsilon_n \xx_n}\approx \Gamma_{\abs{A}}[\XB],
\]
where $(\varepsilon_n)_{n\in A}$ runs over all families of unimodular scalars modeled over a finite set $A\subset \NN$.

The \emph{Banach envelope} of a quasi-Banach space $\XX$ is a pair $(J_\XX,\widehat{\XX})$, where $\widehat{\XX}$ is a Banach space and $J_\XX\colon\XX\to \widehat{\XX}$ is a linear contraction, satisfying the following universal property: for every Banach space $\YY$ and every linear contraction $T\colon\XX\to \YY$ there is a linear contraction $\widehat{T}\colon\XX\to \YY$ such that $\widehat{T}\circ J_\XX=T$. If $\XB=(\xx_n)_{n=1}^\infty$ is a Schauder basis of $\XB$, then
\[
\widehat{\XB}:=\enpar{J_\XX(\xx_n)}_{n=1}^\infty
\]
is a Schauder basis of $\widehat{\XX}$. We call $\widehat{\XB}$ the Banach envelope of $\XB$. Both the Banach envelope basis and the dual basis of a symmetric basis are symmetric.

The boundedness of the averaging projections on a quasi-Banach space relative to a basis imposes some restrictions on the fundamental function of the basis.

\begin{lemma}\label{lem:fundamental-envelope-dual}
Let $\XB$ be a symmetric Schauder basis of a quasi-Banach space $\XX$. Assume that all the averaging projections relative to $\XB$ are bounded. Then
\[
\Gamma_m[\XB]
\approx\Gamma_m[\widehat{\XB}]
\approx\frac{m}{\Gamma_m[\XB^*]}, \quad m\in\NN.
\]
\end{lemma}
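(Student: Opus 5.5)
Write $\Phi_m=\Gamma_m[\XB]$, $\widehat\Phi_m=\Gamma_m[\widehat\XB]$ and $\Phi^*_m=\Gamma_m[\XB^*]$. The plan is to prove the chain
\[
\frac{m}{\Phi^*_m}\lesssim \widehat\Phi_m\le \Phi_m\lesssim \frac{m}{\Phi^*_m}.
\]
The first two inequalities hold for any symmetric basis. Boundedness of the averaging projections is used only for the last one.

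For the easy inequalities, note first that $J_\XX$ is a contraction, so $\widehat\Phi_m\le\Phi_m$. Next, since $\widehat\XX^*=\XX^*$ isometrically, the dual basis of $\widehat\XB$ is $\XB^*$. Fix $A$ with $\abs{A}=m$ and set $1_A=\sum_{n\in A}\xx_n$ and $1_A^*=\sum_{n\in A}\xx_n^*$. Testing gives $m=1_A^*(J_\XX(1_A))\le \norm{1_A^*}\,\norm{J_\XX(1_A)}$. By super-democracy of the symmetric bases $\XB^*$ and $\widehat\XB$, this yields $m\lesssim \Phi^*_m\widehat\Phi_m$.

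The substantive step is $\Phi_m\Phi^*_m\lesssim m$. Here I would use the averaging projection with a single block, $\At=\{A\}$, $\abs{A}=m$. By hypothesis, and by symmetry, I expect these are uniformly bounded, say by $C$. If the hypothesis only gives each one bounded, I would first upgrade it: take infinitely many disjoint blocks of each size $m$ and use a uniform-boundedness/gliding-hump argument combined with the symmetry constant. For $f\in\XX$ we have
\[
V_{\{A\}}(f)=\frac{1_A^*(f)}{m}\,1_A,
\]
so $\abs{1_A^*(f)}\,\Phi_m/m\lesssim \norm{V_{\{A\}}(f)}\le C\norm{f}$. Taking the supremum over $\norm{f}\le1$ gives $\norm{1_A^*}\,\Phi_m\lesssim C m$. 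Since $\XB^*$ is symmetric, $\norm{1_A^*}\approx\Phi^*_m$, which completes the chain and hence the lemma.

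The main obstacle is the uniformity of the bounds for the projections $V_{\{A\}}$. I would handle it as follows. Suppose $\sup_{\abs{A}=m}\norm{V_{\{A\}}}$ were unbounded in $m$. Pick sizes $m_k$ with norms at least $4^k$, and by symmetry place the corresponding blocks $A_k$ disjointly. Then $V_{(A_k)_k}$ is a single averaging projection, and composing it with the bounded coordinate projection $S_{A_k}$, available by unconditionality, gives $S_{A_k}\circ V_{(A_k)_k}=V_{\{A_k\}}\circ S_{A_k}$. Hence $\norm{V_{\{A_k\}}}$ is controlled uniformly by $\norm{V_{(A_k)_k}}$, which is a contradiction.
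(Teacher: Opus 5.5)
Your proposal is correct and follows essentially the paper's proof: the bounds $m\le\Gamma_m[\widehat{\XB}]\,\Gamma_m[\XB^*]$ and $\Gamma_m[\widehat{\XB}]\le\Gamma_m[\XB]$ are obtained exactly as in the paper, and your uniformity step is the paper's argument. The paper fixes one partition $\At$ of $\NN$ containing a block $A_m$ of every size $m$ and uses $\norm{V_{\{A_m\}}}\le\norm{S_{A_m}}\,\norm{V_\At}$ directly, so your argument by contradiction and the $4^k$ growth are unnecessary (as is super-democracy for the easy inequality, which follows straight from the definition of $\Gamma_m$ as a supremum).
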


\begin{proof}
Pick a partition $\At=(A_j)_{j\in J}$ of $\NN$ such that for each $m\in\NN$ there is $j\in J$ with $\abs{A_j}=m$. Since $\XB$ is unconditional and $V_{\At}[\XB]$ is bounded, $\Gamma_m[\XB] \Gamma_m[\XB^*] \lesssim m$ for all $m\in\NN$. Since $m\le \Gamma_m[\widehat{\XB}] \Gamma_m[\XB^*]$ and $\Gamma_m[\widehat{\XB}]\le \Gamma_m[\XB]$ for all $m\in\NN$, we are done.
\end{proof}

As an immediate consequence of Lemma~\ref{lem:fundamental-envelope-dual}, one gets the following restriction.

\begin{lemma}\label{lem:gamma-linear}
Let $\XB$ be a symmetric Schauder basis of a quasi-Banach space $\XX$. Assume that all the averaging projections relative to $\XB$ are bounded. Then:
\begin{enumerate}[label=(\roman*),leftmargin=*,widest=ii]
\item\label{it:lem:gamma-lineal:a} $\Gamma_m[\XB]\lesssim m$ for all $m\in\NN$;
\item\label{it:lem:gamma-lineal:b} If $\Gamma_m[\XB]\approx m$ for all $m\in\NN$, then $\XB$ dominates the canonical basis of $\ell_1$.
\end{enumerate}
\end{lemma}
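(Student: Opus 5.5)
The plan is to derive both items from Lemma~\ref{lem:fundamental-envelope-dual}, combined with the duality estimate $m\le \Gamma_m[\widehat{\XB}]\,\Gamma_m[\XB^*]$ and with elementary bounds on $\Gamma_m[\widehat{\XB}]$ in a Banach space.

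For \ref{it:lem:gamma-lineal:a}, I will use that $\widehat{\XX}$ is a Banach space. By the triangle inequality,
\[
\Gamma_m[\widehat{\XB}]\le m\sup_n\norm{J_\XX(\xx_n)}\le m\sup_n\norm{\xx_n}.
\]
The right-hand side is finite because a symmetric basis is semi-normalized; indeed $\norm{\xx_n}\approx\Gamma_1[\XB]$ by super-democracy. Lemma~\ref{lem:fundamental-envelope-dual} gives $\Gamma_m[\XB]\approx\Gamma_m[\widehat{\XB}]$, and together these yield $\Gamma_m[\XB]\lesssim m$. An alternative route is to note that $\Gamma_m[\XB^*]\gtrsim 1$, since the dual basis is semi-normalized, and then use $\Gamma_m[\XB]\approx m/\Gamma_m[\XB^*]$.

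For \ref{it:lem:gamma-lineal:b}, assume $\Gamma_m[\XB]\approx m$. Lemma~\ref{lem:fundamental-envelope-dual} then gives $\Gamma_m[\XB^*]\approx 1$, so the dual basis $\XB^*$ has bounded fundamental function. Since $\XB^*$ is symmetric, hence unconditional, and $\Gamma_m[\XB^*]\lesssim 1$, the basis $\XB^*$ is dominated by the unit vector basis of $c_0$:
\[
\norm{\sum_{n\in A} b_n\xx_n^*}\lesssim \max_n\abs{b_n}.
\]
This holds because, for real scalars, every $b\in B_{\ell_\infty}$ with finite support is an average of signs, and the Banach space $\XX^*$ is convex; in the complex case one uses real and imaginary parts with a constant factor.

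Now fix $f=\sum_{n\in A} a_n\xx_n$ with $A$ finite, and choose unimodular $\varepsilon_n$ with $\varepsilon_n a_n=\abs{a_n}$. Then
\[
\sum_{n\in A}\abs{a_n}=\Bigl(\sum_{n\in A}\varepsilon_n\xx_n^*\Bigr)(f)\le \norm{\sum_{n\in A}\varepsilon_n\xx_n^*}\,\norm{f}\lesssim \Gamma_{\abs{A}}[\XB^*]\,\norm{f}\lesssim\norm{f}.
\]
Hence $\XB$ dominates the canonical $\ell_1$ basis, which is exactly the claim $\norm{(a_n)}_{\ell_1}\lesssim\norm{\sum a_n\xx_n}$.

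The argument is largely routine. The only point needing care is that all the estimates are made on finitely supported vectors in $\operatorname{span}(\XB)$, where the dual pairing is unambiguous. I expect no real obstacle; the key input is that boundedness of the averaging projections forces $\Gamma_m[\XB^*]\approx 1$.
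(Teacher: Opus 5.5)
Your proof is correct and matches the paper, which states this lemma as an immediate consequence of Lemma~\ref{lem:fundamental-envelope-dual} without writing out details: item (i) follows from $\Gamma_m[\widehat{\XB}]\lesssim m$ (or from $\Gamma_m[\XB^*]\gtrsim 1$), and item (ii) follows from $\Gamma_m[\XB^*]\approx 1$ by pairing $f$ with $\sum_{n\in A}\varepsilon_n\xx_n^*$, exactly as you do. The detour showing that $\XB^*$ is dominated by the $c_0$ basis is unnecessary, since $\norm{\sum_{n\in A}\varepsilon_n\xx_n^*}\le\Gamma_{\abs{A}}[\XB^*]$ holds directly by the definition of the fundamental function.
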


Lemma~\ref{lem:gamma-linear} shows that the boundedness of the averaging projections is a strong geometric restriction. Another obstruction comes from the uniqueness of the lattice structure induced by an unconditional basis. Recall that a quasi-Banach space $\XX$ with an unconditional basis $\XB$, which we assume to be normalized, is said to have a \emph{unique unconditional basis} if any other normalized unconditional basis of $\XX$ is equivalent to a permutation of $\XB$.

If the Schauder basis $\XB$ is symmetric, in particular it is unconditional and equivalent to all its permutations. Lindenstrauss and Zippin proved that if a Banach space with a symmetric basis has a unique unconditional basis then it must be equivalent to one of the three spaces $c_{0}$, $\ell_{1}$, or $\ell_{2}$. Now we can bring to play the boundedness of the averaging projections to extend this result to general quasi-Banach spaces.

\begin{theorem}[cf.\@ \cite{LinZip1969}*{Theorem 1}]\label{thm:LZ}
Let $\XX$ be a quasi-Banach space with a symmetric basis $\XB$. Assume that $\XX$ has a unique unconditional basis and that the averaging projections relative to $\XB$ are bounded. Then $\XX$ is isomorphic to $\ell_1$, $\ell_2$, or $c_0$.
\end{theorem}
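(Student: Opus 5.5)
The plan is to reduce to the Banach case, where the theorem of Lindenstrauss and Zippin \cite{LinZip1969}*{Theorem 1} is available. The aim is to show that $\XX$ is locally convex, i.e.\ that $J_\XX\colon\XX\to\widehat{\XX}$ is an isomorphism. If $\XX$ turns out to be a Banach space, its symmetric basis together with uniqueness of unconditional basis forces $\XX\simeq c_0,\ell_1,\ell_2$. So the whole task is to rule out genuine non-local convexity.

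The first step uses uniqueness of unconditional basis to produce many averaging-type block bases equivalent to $\XB$. Take any partition $\At=(A_j)_{j=1}^\infty$ of $\NN$ into sets of a fixed size $m$. Consider the normalized block basis $\yy_j=\sum_{n\in A_j}\xx_n/\Gamma_m[\XB]$. Its closed span is the range of $V_{\At}[\XB]$, which is bounded by hypothesis, so this span is complemented. Symmetry of $\XB$ makes $(\yy_j)$ unconditional and symmetric.

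To pass this to the whole space, I would apply a Pełczyński-type decomposition. Write $\XX\simeq\XX\oplus\XX$ via the odd and even subsequences, which are equivalent to $\XB$ by subsymmetry. Use $\XX\simeq \YY_m\oplus \ZZ_m$, where $\ZZ_m$ is the kernel of $V_{\At}$. Then $\XX\oplus \YY_m$ has the unconditional basis $\XB\oplus(\yy_j)$. If $\XX\oplus\YY_m\simeq\XX$, uniqueness gives that $\XB\oplus(\yy_j)$ is equivalent to a permutation of $\XB$. Hence $(\yy_j)$ is equivalent to a subsequence of $\XB$, and thus to $\XB$ itself. I expect $\XX\simeq\XX\oplus\YY_m$ to follow by decomposing $\NN$ into infinitely many infinite pieces and averaging only on one of them, using symmetry. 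Moreover the equivalence constants should be uniform in $m$: apply the argument once to a partition containing blocks of all sizes infinitely often, and use the known fact that uniqueness of unconditional basis yields uniform constants by a standard gliding argument.

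With uniform constants, we get
\[
\norm{\sum_j a_j \sum_{n\in A_j}\xx_n}\approx \Gamma_m[\XB]\,\norm{\sum_j a_j\xx_j}
\]
uniformly in $m$ and in disjoint $A_j$ with $\abs{A_j}=m$. Taking $a_j=1$ for $j\le k$ gives $\Gamma_{km}\approx\Gamma_k\Gamma_m$, so the fundamental function is submultiplicative-regular, i.e.\ essentially a power $m^{1/p}$. Lemma~\ref{lem:gamma-linear}\ref{it:lem:gamma-lineal:a} gives $p\ge1$. If $p=1$, Lemma~\ref{lem:gamma-linear}\ref{it:lem:gamma-lineal:b} gives that $\XB$ dominates the $\ell_1$ basis, so $\XX=\ell_1$.

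For $p>1$, the remaining and main obstacle is to upgrade the block-equivalence to local convexity. I would try to show $\norm{\sum_{j=1}^k f_j}\lesssim\sum_j\norm{f_j}$ for disjointly supported $f_j$. The plan is to approximate each $f_j$ by step vectors, replace level sets by averaged blocks, and use the displayed estimate together with Lemma~\ref{lem:fundamental-envelope-dual}. That lemma gives $\Gamma_m[\XB]\approx\Gamma_m[\widehat{\XB}]$, which should let one compare $\XB$ with $\widehat{\XB}$ on all such step vectors and conclude that $J_\XX$ is an isomorphism. I am least sure of this step. If the direct estimate fails, I would instead invoke Kalton's result that an $L$-convex lattice with unique unconditional basis and a symmetric basis must be among the classical ones, after verifying $L$-convexity from the self-similarity obtained above.
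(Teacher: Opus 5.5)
Your first two steps are sound, and they are the Lindenstrauss--Zippin scheme the paper has in mind: the bounded averaging projection complements the span of the constant-coefficient blocks, Pe{\l}czy\'nski's decomposition and uniqueness make the normalized blocks equivalent to $\XB$, and a single partition containing every block size infinitely often makes this uniform.

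The gap is in what you do next. You keep only the equal-size estimate $\norm{\sum_j a_j\sum_{n\in A_j}\xx_n}\approx\Gamma_m\norm{\sum_j a_j\xx_j}$ and try to reach local convexity from it together with Lemma~\ref{lem:fundamental-envelope-dual}. These ingredients cannot suffice. Take $\ell_{p,q}$ with $0<q<1<p$. The dilation property of the Lorentz quasi-norm gives exactly your displayed estimate with $\Gamma_m\approx m^{1/p}$, uniformly in $m$. Theorem~\ref{thm:general} and Lemma~\ref{lem:fundamental-envelope-dual} give $\Gamma_m[\XB]\approx\Gamma_m[\widehat{\XB}]$. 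Yet $\ell_{p,q}$ is not locally convex.

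Your $p=1$ step is also invalid. Lemma~\ref{lem:gamma-linear}\ref{it:lem:gamma-lineal:b} gives only the lower estimate $\norm{\sum_n a_n\xx_n}\gtrsim\sum_n\abs{a_n}$. The canonical basis of $\ell_{1,q}$, $0<q<1$, satisfies this with $\Gamma_m\approx m$ and is not $\ell_1$.

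The fallback does not exist either. $\ell_r$ for $0<r<1$ (Kalton) and $\ell_{1,q}$ for $0<q<1$ (L-convex by Theorem~\ref{thm:SLConvex}) are L-convex, have symmetric bases, and have unique unconditional bases. Kalton's results in fact go the other way.

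The missing idea is to exploit the mixed-size equivalence, which your partition already provides. Let $B_1,\dots,B_N$ be disjoint with $\abs{B_i}=k_i$. Apply the equivalence with coefficients $\norm{\sum_{n\in B_i}\xx_n}\approx\Gamma_{k_i}$, together with super-democracy and $\Gamma_k\approx k^{1/p}$:
\[
\Big(\sum_{i=1}^N k_i\Big)^{1/p}\approx\Gamma_{k_1+\dots+k_N}\approx\norm{\sum_{i=1}^N\sum_{n\in B_i}\xx_n}\approx\norm{\sum_{i=1}^N\Gamma_{k_i}\xx_i}\approx\norm{\sum_{i=1}^N k_i^{1/p}\xx_i},
\]
with uniform constants. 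Use homogeneity and unconditionality, and approximate each $a_i>0$ within a factor $2$ by $(k_i/K)^{1/p}$ with $K$ large. This gives $\norm{\sum_i a_i\xx_i}\approx\norm{(a_i)_i}_{\ell_p}$. So $\XB$ is equivalent to the unit vector basis of $\ell_p$, or of $c_0$ if $\Gamma$ is bounded, and no convexity is used anywhere.

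Then Lemma~\ref{lem:gamma-linear}\ref{it:lem:gamma-lineal:a} forces $p\ge1$. This is precisely where the averaging hypothesis rules out $\ell_p$ with $p<1$, which does have a unique unconditional basis. The cases $1<p<\infty$, $p\ne2$, are excluded because $\ell_p\simeq(\bigoplus_n\ell_2^n)_{\ell_p}$ carries an unconditional basis not equivalent to a permutation of the unit vector basis.
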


\begin{corollary}
The averaging projections relative to the canonical bases in the spaces $\ell_{1,q}$ for $0<q<1$, and $\ell_{p,q}$ for $0<p<1$, $0<q\le 1$, $p\ne q$, are not bounded.
\end{corollary}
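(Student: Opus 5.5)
We will argue by contradiction in each case, using Lemma~\ref{lem:gamma-linear} and Theorem~\ref{thm:LZ}. Recall that the canonical basis of $\ell_{p,q}$ is symmetric and that its fundamental function satisfies $\Gamma_m[\ell_{p,q}]\approx m^{1/p}$.

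\emph{The case $0<p<1$, $0<q\le 1$, $p\neq q$.} Here $m^{1/p}/m\to\infty$, so $\Gamma_m\lesssim m$ fails. This contradicts Lemma~\ref{lem:gamma-linear}\ref{it:lem:gamma-lineal:a}, so the averaging projections cannot all be bounded. The restriction $p\ne q$ only serves to exclude $\ell_p$ itself, and Lemma~\ref{lem:gamma-linear} rules that space out as well. This case is therefore immediate.

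\emph{The case $p=1$, $0<q<1$.} Now $\Gamma_m\approx m$, so Lemma~\ref{lem:gamma-linear}\ref{it:lem:gamma-lineal:b} applies: if the averaging projections were bounded, the canonical basis of $\ell_{1,q}$ would dominate the $\ell_1$ basis. Equivalently, we would have $\ell_{1,q}\subset \ell_1$ with $\norm{f}_{\ell_1}\lesssim\norm{f}_{\ell_{1,q}}$. For $q<1$ the reverse inclusion $\ell_1\subset \ell_{1,q}$ is strict, and we would have to check that $\ell_{1,q}\subset\ell_1$ is compatible with this, so this route yields no contradiction. Indeed, since $q<1$ the space $\ell_{1,q}$ embeds in $\ell_{1,1}=\ell_1$, so domination holds and this lemma alone gives nothing. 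We therefore switch to Theorem~\ref{thm:LZ}. We would invoke the known result (Kalton, and Albiac--Ansorena et al.\ on uniqueness of unconditional basis in nonlocally convex Lorentz spaces) that $\ell_{1,q}$ for $0<q<1$ has a unique unconditional basis up to permutation. We would also use the fact that $\ell_{1,q}$ is not isomorphic to $\ell_1$, $\ell_2$ or $c_0$. It is not locally convex, since its Banach envelope is $\ell_1$ while the space is strictly smaller, so it is not isomorphic to any Banach space. Theorem~\ref{thm:LZ} then shows that the averaging projections are not bounded.

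The main obstacle is justifying the uniqueness of the unconditional basis of $\ell_{1,q}$. We would cite the literature: $\ell_{1,q}$ is an $L$-convex lattice whose Banach envelope is $\ell_1$, and the Kalton-type criteria for uniqueness of unconditional structure in such lattices apply. Alternatively, one could try a direct argument with the partition $\At$ used in Lemma~\ref{lem:fundamental-envelope-dual}. One would average a vector of the form $\sum_{n\le m} n^{-1}\ee_n$ blockwise and compare its $\ell_{1,q}$ quasi-norm, which is of order $(\log m)^{1/q}$, with the quasi-norm of its average, which is of order $\log m$. This is the reverse of the needed direction, so the averages would instead have to be tested on dyadic-block configurations. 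Because of that uncertainty we prefer the route through Theorem~\ref{thm:LZ}.
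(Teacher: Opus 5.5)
Your proposal is correct and follows the paper's proof. For $0<p<1$ you use Lemma~\ref{lem:gamma-linear}\ref{it:lem:gamma-lineal:a}, since $\Gamma_m\approx m^{1/p}$. For $\ell_{1,q}$ with $0<q<1$ the paper likewise combines Theorem~\ref{thm:LZ} with two facts: uniqueness of unconditional basis, which it takes from the Albiac--Leranoz theorem quoted in Section~\ref{sect:RP} (applied with $s_m=m$), and the non-local convexity of $\ell_{1,q}$ that you verify.

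Your detour through Lemma~\ref{lem:gamma-linear}\ref{it:lem:gamma-lineal:b} is rightly abandoned, since in the paper it only handles $q>1$. One slip there: for $q<1$ the strict inclusion is $\ell_{1,q}\subsetneq\ell_1$, not $\ell_1\subset\ell_{1,q}$.
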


\begin{proof}
Notice that if $0<r<1<s$, then $\ell_{1,r}\subsetneq \ell_1\subsetneq\ell_{1,s}$, and so $\ell_{1,r}$ has a unique unconditional basis \cite{AlbiacLeranoz2008}. Hence, Lemma~\ref{lem:gamma-linear}\ref{it:lem:gamma-lineal:b} and Theorem~\ref{thm:LZ} rule out the possibility that the averaging projections relative to the canonical basis of $\ell_{1,q}$, $q\not=1$, are bounded. If $p<1$, the boundedness of averaging projections relative to the canonical basis of $\ell_{p,q}$ is ruled out using Lemma~\ref{lem:gamma-linear}\ref{it:lem:gamma-lineal:a} or Theorem~\ref{thm:LZ}.
\end{proof}
%--------------------------------------------
\section{Averaging projections in weighted Lorentz sequence spaces}\label{sect:RP}\noindent
% -------------------------------------------
In this section we prove the main result of the paper. As we announced in the Introduction,
the boundedness of averaging projections in the Lorentz spaces $\ell_{p,q}$, $0<q<1<p<\infty$, will be obtained as a consequence of a more general theorem for weighted Lorentz sequence spaces. There are two reasons for working in this greater generality. First, the proof itself depends much more on the regularity properties of the underlying weight than on the special form of the sequence $\primp_p$. Second, phrasing the argument in terms of weights clarifies which part of the mechanism is specific to classical Lorentz spaces and which part only uses abstract rearrangement estimates.

Weighted Lorentz sequence spaces provide a flexible scale of rearrangement invariant atomic quasi-Banach lattices whose geometry is governed by two parameters: an index $q\in(0,\infty]$ that measures how the size of the coordinates is aggregated, and a nondecreasing weight $\prim=(s_m)_{m=1}^\infty$ that determines the growth of the fundamental function. For our purposes, the advantage of the weighted setting is that it allows us to isolate the precise hypotheses on the weight that guarantee boundedness of the averaging projections.

Let $c_{00}^+$ be the positive cone of $c_{00}$. We denote by $D(g)$ the \emph{nonincreasing} rearrangement of $g\in c_{00}^+$. Given $g\in c_{00}$ we set
\[
\norm{g}_{q,\prim}=\enpar{\sum_{n=1}^\infty (s_n a_n)^q \frac{s_n-s_{n-1}}{s_n}}^{1/q}, \, D(g)=(a_n)_{n=1}^\infty,
\]
with the convention that $s_0=0$ and the standard modification if $q=\infty$. It is known \cite{AABW2021}*{Equation (9.3)} that, for all $0<p<\infty$,
\begin{equation}\label{eq:Lorentzpqw}
\norm{g}_{q,\prim}\approx \enpar{\sum_{n=1}^\infty (s_n a_n)^q \frac{s_n^p-s^p_{n-1}}{s_n^p}}^{1/q}, \, g\in c_{00}^+, \, D(g)=(a_n)_{n=1}^\infty.
\end{equation}
Now, we extend $\norm{\cdot}_{q,\prim}$ to a function from $\FF^\NN$ to $[0,\infty]$ by
\[
\norm{f}_{q,\prim}=\sup \enbrace{ \norm{g}_{q,\prim} \colon g\in c_{00}^+, \, g\le\abs{f}}.
\]
The Lorentz space $d_q(\prim)$ consists of all $f\in\FF^\NN$ such that $\norm{f}_{q,\prim}<\infty$. If the sequences $\prim$ and $\primt$ satisfy $\prim\lesssim \primt$, then $\norm{f}_{q,\prim}\lesssim \norm{f}_{q,\primt}$ for all $f\in c_{00}^+$. In particular, if $\prim\approx\primt$ then $d_q(\prim)=d_q(\primt)$. These connections between different gauges and spaces follow from the Abel summation formula, which states that
\begin{equation*}
\sum_{n=1}^\infty a_n b_n=\sum_{n=1}^\infty \enpar{a_n-a_{n+1}}\sum_{j=1}^n b_j
\end{equation*}
for all eventually null sequences $(a_n)_{n=1}^\infty$ and $(b_n)_{n=1}^\infty$.

If $\prim$ is \emph{doubling}, that is, $s_{2m}\lesssim s_m$ for $m\in\NN$, then $d_q(\prim)$ is a rearrangement invariant atomic quasi-Banach lattice. Further, $d_q(\prim)$ is minimal if and only if $q<\infty$ and $\prim$ is unbounded. In the trivial case that $\prim$ is bounded, then $d_q(\ww)=\ell_\infty$. By \eqref{eq:Lorentzpqw},
\begin{equation}\label{eq:FFL}
\Gamma_m[d_q(\prim)] \approx \prim.
\end{equation}
In a sense, the sequence $\prim$ places $d_q(\prim)$ within a scale of spaces and the index $q\in(0,\infty]$ determines its position in this scale. In fact,
\[
d_q(\prim) \subset d_r(\prim), \quad 0<q<r\le\infty.
\]
Given $0<p<\infty$,
\begin{equation}\label{eq:LatticeConvex}
\enpar{d_q(\prim)}^{(p)}=d_{pq}(\prim^{1/p}).
\end{equation}

Hunt \cite{Hunt1966} carried out a systematic study of classical function and sequence spaces $L_{p,q}$ for $0<p<\infty$ and $0<q\le\infty$. The study of the geometry of Lorentz sequence spaces was pioneered in \cite{ACL1973}. There, the authors picked $0<q<\infty$ and a nonnegative sequence $\ww=(w_n)_{n=1}^\infty$ with $w_1>0$, and defined the weighted Lorentz space $d(\ww,q)$ as the completion of the space of all $f\in c_{00}$ such that
\[
\norm{f}_{(\ww,q)}= \enpar{\sum_{n=1}^\infty a_n^q w_n}^{1/q}, \quad (a_n)_{n=1}^\infty=D(\abs{f}).
\]
Using our notation, equation\eqref{eq:Lorentzpqw} yields $d(\ww,q)=d_q(\prim)$, where $\prim=(s_m)_{m=1}^\infty$ is the sequence
\begin{equation}\label{eq:ws}
s_m=\enpar{\sum_{n=1}^m w_n}^{1/q}.
\end{equation}
In particular, if $0<p<\infty$ we obtain $\ell_{p,q}=d_q(\primp_p)$. If $\ww$ is decreasing, then $d(\ww,q)$ is a $p$-Banach space for $p=\min\{1,q\}$.

Altshuler et al.\@ investigated the spaces $d(\ww,q)$ in the case when $\ww$ is nonincreasing and $q\ge 1$, so that $d(\ww,q)$ is a Banach space. This premise was maintained in several papers that followed (see \cites{ACL1973,Altshuler1975,Allen1978}). Popa \cite{Popa1981} studied the nonlocally Lorentz spaces $d(\ww,q)$ for $q<1$ and $\ww$ nonincreasing. He investigated their Mackey topology and their subspace structure, and proved the following result.

\begin{theorem}[\cite{Popa1981}*{Lemma 3.1}]\label{thm:Popa}
Let $0<q<1$ and $\ww=(w_n)_{n=1}^\infty$ be a nonincreasing nonsummable weight. Assume that the sequence $(s_m)_{m=1}^\infty$ defined in\eqref{eq:ws} is unbounded. Let $(f_k)_{k=1}^\infty$ be a semi-normalized block basic sequence of $d(\ww,q)$. If $\lim_k \norm{f_k}_\infty=0$, then a subsequence of $(f_k)_{k=1}^\infty$ is equivalent to the canonical $\ell_q$-basis.
\end{theorem}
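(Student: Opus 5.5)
This is a plan to prove Popa's lemma (Theorem~\ref{thm:Popa}) by a gliding-hump argument in the nonincreasing rearrangement. I will produce a subsequence $(f_{k_j})$ and show two estimates for $\sum_j c_j f_{k_j}$: an upper $\ell_q$ bound and a lower $\ell_q$ bound.

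\emph{Upper bound.} This is free. Since $\ww$ is nonincreasing, $\norm{\cdot}_{(\ww,q)}^q$ is a $q$-norm on $d(\ww,q)$. Semi-normalization then gives
\[
\norm{\sum_j c_j f_{k_j}}^q\le \sum_j \abs{c_j}^q\norm{f_{k_j}}^q\lesssim \sum_j\abs{c_j}^q .
\]

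\emph{Lower bound: setup.} Write $W_m=\sum_{n\le m} w_n=s_m^q$. Because $\norm{f_k}_\infty\to0$ while $\norm{f_k}\approx 1$, the mass of each $f_k$ must be spread over many coordinates. Quantitatively, the number $N_k$ of coordinates of $f_k$ exceeding any fixed level $\delta$ contributes at most $\norm{f_k}_\infty^q W_{N_k}$. The role of $W_m\to\infty$ is to give the space room to place many blocks far down the rearrangement.

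\emph{Choosing the subsequence.} I will choose $k_1<k_2<\cdots$ inductively. Having fixed $k_1,\dots,k_{j-1}$, let $L_{j-1}$ be the total support size of those blocks. I then pick $k_j$ with $\norm{f_{k_j}}_\infty$ so small that two things hold:
\begin{enumerate}[label=(\alph*),leftmargin=*]
\item the first $L_{j-1}$ entries of $D(\abs{f_{k_j}})$ contribute at most $\varepsilon_j$ to $\norm{f_{k_j}}^q$, which uses $\norm{f_{k_j}}_\infty^q W_{L_{j-1}}\le\varepsilon_j$;
\item conversely, the blocks already chosen have entries much larger than most of the mass of $f_{k_j}$, so the later blocks essentially occupy positions after theirs in the rearrangement.
\end{enumerate}

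\emph{Lower bound: the estimate.} The lower estimate
\[
\norm{\sum_j c_j f_{k_j}}^q\gtrsim\sum_j\abs{c_j}^q
\]
then follows from a ``shifted weights'' inequality for nonincreasing $\ww$. If $g,h$ are disjoint and every coordinate of $h$ is below every coordinate of $g$, then $\norm{g+h}^q=\norm{g}^q+\sum_n b_n^q w_{n+\abs{\supp g}}$, where $(b_n)=D(\abs{h})$. What must be shown is that $\sum_n b_n^q w_{n+L}\ge \sum_n b_n^q w_n - \text{small}$ whenever $\norm{h}_\infty$ is tiny relative to $L$.

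\emph{Main obstacle.} This comparison between $w_{n+L}$ and $w_n$ is the hardest step. My plan for it is:
\begin{enumerate}[label=(\roman*),leftmargin=*]
\item Use Abel summation to rewrite $\sum_n b_n^q w_{n+L}$ as $\sum_n (b_n^q-b_{n+1}^q)(W_{n+L}-W_L)$.
\item Use that $\ww$ is nonincreasing and $W$ is unbounded to get $W_{n+L}-W_L\ge W_n - W_L$, with $W_L$ negligible for the indices $n$ carrying most of the mass.
\item Conclude that this holds because $b$ is tiny, so the mass sits at $n$ with $W_n\gg W_L$.
\end{enumerate}

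Two technical points remain. Since the coefficients $c_j$ are arbitrary, the ordering of sizes of $c_jf_{k_j}$ is not controlled. I will handle this by normalizing and by the standard remark that $\ell_q$-equivalence only needs testing on finitely supported sequences. I will further reduce, via a perturbation argument using the $q$-norm, to the case where the block pieces are interlaced in a controlled manner, absorbing the errors $\sum\varepsilon_j$.
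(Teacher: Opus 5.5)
The paper quotes this result from Popa without proof, so I am judging your plan on its own terms. Your upper estimate is fine. Your ``shifted weights'' inequality is also fine, and easier than you suggest. Abel summation together with $W_{n+L}\ge W_n$ gives exactly $\sum_n b_n^q w_{n+L}\ge \sum_n b_n^q w_n-b_1^q W_L$. So the loss is at most $\norm{h}_\infty^q W_L$, which is precisely what condition (a) controls.

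The genuine gap is the step you defer to the end: arbitrary coefficients. Your lower bound uses the identity for $\norm{g+h}^q$ when every entry of $h$ lies below every entry of $g$. That is, it assumes the blocks $c_jf_{k_j}$ appear in the order $j=1,2,\dots$ in the nonincreasing rearrangement of $\sum_j c_jf_{k_j}$. Condition (b) can secure this, even approximately, only when the $\abs{c_j}$ are comparable. If $\abs{c_{j+1}}$ is large compared with $\abs{c_j}$, many entries of $c_{j+1}f_{k_{j+1}}$ overtake those of $c_jf_{k_j}$. Block $j$ is then pushed down by up to $\abs{\supp f_{k_{j+1}}}$ positions, and that number is chosen after step $j$, so nothing fixed at step $j$ controls it. With many blocks the interlacing can be arbitrary. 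Normalizing $\sum_j\abs{c_j}^q=1$ and testing on finitely supported sequences do not change this. Nor is it clear how a perturbation in the $q$-norm could restore a controlled order, since the offending entries are not small. As written, the ``reduction to controlled interlacing'' is not an argument.

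The missing idea is that a lower bound never requires knowing the true rearrangement. Since $\ww$ is nonincreasing, the rearrangement inequality (the same device used in the proof of Lemma~\ref{lem:key}) gives
$\norm{h}_{(\ww,q)}^q\ge\sum_n \abs{h_{\psi(n)}}^q w_n$ for every injection $\psi\colon\NN\to\NN$. Choose $\psi$, independently of the coefficients, so that position $L_{j-1}+n$ goes to the coordinate carrying the $n$-th largest entry $b_{j,n}$ of $\abs{f_{k_j}}$. Then your Abel estimate, applied blockwise with $L=L_{j-1}$, yields
\[
\norm{\sum_j c_jf_{k_j}}^q\ge\sum_j \abs{c_j}^q\sum_n b_{j,n}^q\, w_{L_{j-1}+n}\ge \sum_j\abs{c_j}^q\enpar{\norm{f_{k_j}}^q-\norm{f_{k_j}}_\infty^q W_{L_{j-1}}}.
\]
Condition (a) with $\varepsilon_j\le\frac12\inf_k\norm{f_k}^q$ then gives $\norm{\sum_j c_jf_{k_j}}^q\ge\frac12\inf_k\norm{f_k}^q\sum_j\abs{c_j}^q$. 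With this, condition (b) and the perturbation step become unnecessary.
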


Nawrocki and Orty\'{n}ski \cite{NawOrt1985} solved several questions raised in \cite{Popa1981}. One of their advances was finding instances where $\ell_q$ is a complemented subspace of $d(\ww,q)$.

\begin{theorem}[\cite{NawOrt1985}*{Theorem 2}]\label{thm:NO}
Let $0<q<1$ and $\ww=(w_n)_{n=1}^\infty$ be a nonincreasing nonsummable weight. Let $(s_m)_{m=1}^\infty$ be as in\eqref{eq:ws}. Assume that $\lim_m s_m/m=0$. Then there is a block basic sequence of $d(\ww,q)$ that is complemented and equivalent to the canonical $\ell_q$-basis.
\end{theorem}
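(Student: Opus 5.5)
Theorem~\ref{thm:NO} is a result quoted from \cite{NawOrt1985}, so the paper will simply cite it. If I had to prove it, I would follow the standard route of building blocks of flat vectors and projecting onto them with averaging-type functionals.

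\emph{Construction.} Since $\lim_m s_m/m=0$ and $\ww$ is nonincreasing and nonsummable, I can choose successive disjoint intervals $A_k$ of lengths $m_k$ growing fast enough that $s_{m_k}/m_k\to 0$ rapidly. Set
\[
f_k=\frac{1}{s_{m_k}}\,\chi_{A_k}.
\]
By \eqref{eq:FFL} each $f_k$ is normalized up to constants, and $\norm{f_k}_\infty=1/s_{m_k}\to 0$.

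\emph{Equivalence with the $\ell_q$-basis.} By Theorem~\ref{thm:Popa}, after passing to a subsequence, $(f_k)$ is equivalent to the canonical $\ell_q$-basis. Alternatively, I would verify this directly. The upper estimate comes from the $q$-norm property of $\norm{\cdot}_{q,\prim}$, which holds since $\ww$ is decreasing. For the lower estimate, if $m_k$ grows fast, the decreasing rearrangement of $\sum_k a_k f_k$ places each block at positions where $w_n$ is essentially that of its own block. Comparing $\sum_n w_n$ over the blocks then gives $\norm{\sum_k a_kf_k}^q\gtrsim \sum_k \abs{a_k}^q$.

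\emph{Projection.} I would define
\[
P(f)=\sum_k \frac{1}{m_k}\Bigl(\sum_{n\in A_k} a_n\Bigr)\, \chi_{A_k}.
\]
This is the averaging projection onto the span of the $f_k$, and it is the step I expect to be the main obstacle, since averaging is not obviously bounded in nonlocally convex spaces. Because $(f_k)$ is an $\ell_q$-basis, it suffices to show
\[
\sum_k \Bigl(\frac{s_{m_k}}{m_k}\Bigl|\sum_{A_k}a_n\Bigr|\Bigr)^q\lesssim \norm{f}^q .
\]
The idea is that the coefficient $s_{m_k}/m_k$ is tiny. I would bound $\abs{\sum_{A_k}a_n}$ through the decreasing rearrangement on $A_k$ using Abel summation,
\[
\sum_{n\in A_k}\abs{a_n}\le \sum_j j\,(a^*_j-a^*_{j+1}).
\]
Then $\bigl(\sum_{A_k}\abs{a_n}\bigr)^q$ is controlled by $\sum_j (j\,a^*_j)^q$-type terms weighted against $(s_{m_k}/m_k)^q$. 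The hypothesis $s_m/m\to0$, together with the fast growth of $m_k$, should make these weights summable against the $w_j$ and dominate them by $\norm{f\chi_{A_k}}^q$. Summing over $k$, using disjointness and the lower $\ell_q$ estimate for disjoint blocks, then completes the bound.
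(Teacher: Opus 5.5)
\paragraph{The summation step fails.}
The paper does not prove Theorem~\ref{thm:NO}; it only quotes it. Its own canonical-projection route, Corollary~\ref{cor:lqCom}, needs the URP and the LRP.

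Your reduction of the projection step to
\[
\sum_k \Bigl(\frac{s_{m_k}}{m_k}\Bigl|\sum_{n\in A_k}a_n\Bigr|\Bigr)^q\lesssim \norm{f}^q
\]
is sound, since it only uses that $\norm{\cdot}_{(\ww,q)}$ is a $q$-norm. Your proof of this inequality, however, breaks at the last step. The space $d(\ww,q)$ has no lower $q$-estimate for arbitrary disjointly supported vectors. Put $t_m=\sum_{n=1}^m w_n=s_m^q$ and take $f=\sum_{k=1}^K \ee_{n_k}$ with $n_k\in A_k$. Then $\sum_{k}\norm{f\chi_{A_k}}^q=Kw_1$, whereas $\norm{f}^q=t_K$, and the hypothesis $s_m/m\to 0$ forces $t_K/K\to 0$. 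A lower $q$-estimate holds only for (a subsequence of) the flat blocks $f_k$, which is exactly what Theorem~\ref{thm:Popa} provides.

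So once you dominate each term by $\norm{f\chi_{A_k}}^q$, you discard the smallness of $s_{m_k}/m_k$ that has to be exploited across blocks, and the sum cannot be controlled.

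The per-block step is also unjustified as written. Testing on $\chi_B$ with $B\subset A_k$ and $\abs{B}=i$ shows you need $m_k/s_{m_k}\gtrsim \max_{i\le m_k} i/s_i$, and fast growth of $m_k$ alone does not give this. Moreover, bounding $(\sum_j j\lambda_j)^q$ by $\sum_j (j\lambda_j)^q$ is far too lossy compared with $\norm{f}^q=\sum_j(\sum_{i\ge j}\lambda_i)^q w_j$.

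\paragraph{The missing idea: split by level sets, not by blocks.}
Let $\lambda_f(y)=\abs{\{n\colon \abs{a_n}>y\}}$ and $t_0=0$. Then $\norm{f}^q=\int_0^\infty t_{\lambda_f(y)}\,d(y^q)$. Hence the sets $E_\ell=\{n\colon 2^{-\ell-1}<\abs{a_n}\le 2^{-\ell}\}$ satisfy $\sum_{\ell\in\ZZ}2^{-\ell q}t_{\abs{E_\ell}}\lesssim\norm{f}^q$. The left-hand side of the coefficient inequality is $q$-subadditive in $f$. So it suffices to bound it by a constant times $\beta^q t_N$ whenever $\abs{h}\le\beta\chi_E$ with $\abs{E}=N$.

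Choose the blocks as follows.
\begin{enumerate}
\item Take each $m_k$ to be a record index, i.e.\ $m_k/s_{m_k}=\max_{i\le m_k} i/s_i$. There are infinitely many such indices because $m/s_m\to\infty$.
\item Require $t_{m_{k+1}}\ge 2t_{m_k}$, which is possible because $\ww$ is nonsummable.
\item Require $s_{m_{k+1}}/m_{k+1}\le s_{m_k}/(2m_k)$.
\end{enumerate}

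Now split the blocks into two groups.
\begin{itemize}
\item If $m_k\le N$, the $k$th coefficient is at most $\beta s_{m_k}$. These terms contribute at most $2\beta^q t_N$ in total.
\item If $m_k>N$, the $k$th coefficient is at most $\beta N s_{m_k}/m_k$. These terms decay geometrically, and the first of them is at most $\beta s_N$ because that $m_k$ is a record.
\end{itemize}

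This proves the coefficient inequality, so $P$ is bounded. Applying the same inequality to $f=\sum_k a_k f_k$ also gives the lower $\ell_q$-estimate, so you do not need Theorem~\ref{thm:Popa}. It also replaces your direct sketch of that estimate, which is inaccurate: a block's position in the decreasing rearrangement is governed by $\abs{a_k}/s_{m_k}$, not by $k$.
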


Twenty-five years later, Albiac and Leranoz \cite{AlbiacLeranoz2008} proved the following result that rules out the possibility that $\ell_q$ is isomorphic to a complemented subspace of $d(\ww,q)$ in the opposite case that $\inf_m s_m/m>0$.

\begin{theorem}[\cite{AlbiacLeranoz2008}*{Theorem 2.4}]
Let $0<q<1$ and $\ww=(w_n)_{n=1}^\infty$ be a nonincreasing nonsummable weight. Let $(s_m)_{m=1}^\infty$ be defined as in\eqref{eq:ws}. If $\inf_m s_m/m>0$, then $d(\ww,q)$ has a unique unconditional basis.
\end{theorem}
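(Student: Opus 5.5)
The plan is to derive uniqueness from the abstract criterion of Kalton type, as extended to lattices by Albiac and Leranoz. That criterion says that a quasi-Banach space $\XX$ with a normalized unconditional basis $\XB$ has a unique unconditional basis up to permutation provided three conditions hold. First, the lattice structure induced by $\XB$ is $L$-convex. Second, the Banach envelope of $\XX$ is $\ell_1$, with $\widehat{\XB}$ equivalent to the unit vector basis. Third, $\XB$ is \emph{strongly absolute}: for every $\varepsilon>0$ there is $C>0$ such that
\[
\norm{f}_1\le \max\enbrace{C\norm{f}_\infty,\ \varepsilon\norm{f}},\qquad f\in \XX.
\]
So I would verify these three properties for the canonical basis of $d(\ww,q)$ under the hypotheses of the statement.

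\emph{$L$-convexity.} Since $\ww$ is nonincreasing, $d(\ww,1)$ is a Banach lattice, and $\norm{\cdot}_{(\ww,q)}^q$ is the $d(\ww,1)$-norm of $\abs{f}^q$. Hence $d(\ww,q)$ is the $1/q$-convexification of $d(\ww,1)$, in the sense of \eqref{eq:LatticeConvex}, so it is lattice $q$-convex.

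\emph{Banach envelope.} Write $W_m=\sum_{n\le m}w_n=s_m^q$. The hypothesis $\inf_m s_m/m>0$ gives $W_m\gtrsim m^q$. For $f\in c_{00}$ with $D(\abs{f})=(a_n)_{n=1}^\infty$, Abel summation gives
\[
\norm{f}_{(\ww,q)}^q=\sum_{n}(a_n^q-a_{n+1}^q)W_n\gtrsim \sum_n (a_n^q-a_{n+1}^q)n^q=\norm{f}_{\ell_{1,q}}^q.
\]
Since $q<1$, we have $\ell_{1,q}\subset\ell_1$, so $\norm{f}_1\lesssim\norm{f}$. Conversely, the unit vectors are normalized, so every vector of the closed absolutely convex hull of the unit ball has envelope norm at most its $\ell_1$-norm. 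Together these identify $\widehat{d(\ww,q)}=\ell_1$.

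\emph{Strong absoluteness.} I expect this to be the main obstacle. Fix $f$ with $\norm{f}=1$ and $\norm{f}_\infty\le\delta$. I would split $D(\abs f)$ at the level $\eta a_1$, or alternatively split into dyadic level sets $\{2^{-k-1}<a_n\le 2^{-k}\}$ of sizes $N_k$.

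For each level set, $\norm{\cdot}_1$ is about $2^{-k}N_k$, while its contribution to $\norm{f}^q$ is at least about $2^{-kq}(W_{M_k}-W_{M_{k-1}})$, where $M_k=\sum_{j\le k}N_j$. Using $W_m\gtrsim m^q$ together with concavity-type behaviour of $W$ coming from the monotonicity of $\ww$, one should obtain $\norm{f}_1\lesssim \norm{f}$ with a gain on the blocks where the coefficients are small compared with $1$. The nonsummability of $\ww$, i.e.\ $W_m\to\infty$, is what should force $\norm{f}_1$ to be small relative to $\norm{f}$ once the supremum norm is small relative to $\norm{f}$. If this direct estimate turns out to be awkward, I would instead argue by contradiction: take a sequence of normalized $f_k$ with $\norm{f_k}_\infty\to 0$ and $\norm{f_k}_1\ge\varepsilon$, pass to disjoint blocks via a gliding hump, and apply Theorem~\ref{thm:Popa} to get $\ell_q$-behaviour. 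Averaging such blocks would then make the $\ell_1$-norm grow linearly while the quasi-norm grows only like $N^{1/q}$ after normalization, which contradicts the domination $\norm{\cdot}_1\lesssim\norm{\cdot}$ in the appropriate direction.

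Once these three properties are established, I would invoke the uniqueness criterion to conclude that every normalized unconditional basis of $d(\ww,q)$ is permutatively equivalent to the canonical one.
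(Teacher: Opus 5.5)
The paper gives no proof of this statement; it is quoted from Albiac--Ler\'anoz. So your argument has to be judged on its own. Your first two steps are fine. In the paper's notation $d(\ww,q)=d(\ww,1)^{(q)}$, which is lattice $q$-convex, and $\inf_m s_m/m>0$ is exactly what makes the Banach envelope $\ell_1$.

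The third step fails, however: under the stated hypothesis the canonical basis need \emph{not} be strongly absolute. Since $\norm{\sum_{n\in A}\ee_n}_{(\ww,q)}=s_{\abs{A}}$, the flat vectors $f_N=s_N^{-1}\sum_{n=1}^N \ee_n$ satisfy $\norm{f_N}_{(\ww,q)}=1$, $\norm{f_N}_\infty=1/s_N\to 0$ and $\norm{f_N}_1=N/s_N$. Hence strong absoluteness fails as soon as $\liminf_m s_m/m<\infty$. This is not an exotic case. The weight $w_n=n^q-(n-1)^q$ is nonincreasing and nonsummable, gives $s_m=m$ and $d(\ww,q)=\ell_{1,q}$, and there $\norm{f_N}_1=1$ for every $N$. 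This is precisely the space to which the paper applies this theorem in Section~2.

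Your level-set heuristic does work when $s_m/m\to\infty$. From $t\,s_{N(t)}\le \norm{f}$, where $N(t)=\abs{\{n\colon \abs{f_n}>t\}}$, you get $\sup_t tN(t)\le \eta\norm{f}$ with $\eta$ small once $\norm{f}_\infty/\norm{f}$ is small. Then $\norm{f}_1=\int_0^\infty N(t)\,dt\lesssim (\sup_t tN(t))^{1-q}\norm{f}^q$. In fact, under the hypotheses of the statement, strong absoluteness is equivalent to $\lim_m s_m/m=\infty$. So at best your proposal covers that proper subcase.

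Your fallback via Theorem~\ref{thm:Popa} gives no contradiction either. If disjoint normalized blocks with $\norm{f_k}_1\ge\varepsilon$ span $\ell_q$, then $\norm{\sum_{k\le N}f_k}_1\ge \varepsilon N$ while $\norm{\sum_{k\le N}f_k}\approx N^{1/q}\ge N$. This is perfectly consistent with $\norm{\cdot}_1\lesssim\norm{\cdot}$, and the flat blocks above realize exactly this situation.

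What is missing is a uniqueness argument for the borderline regime that uses only what your first two steps give. Those are L-convexity and the fact that the canonical basis becomes the unit vector basis of $\ell_1$ in the Banach envelope, to be combined with the symmetry of the canonical basis. One option is a large-coefficient argument in the spirit of Kalton and Wojtaszczyk. It would show that every normalized unconditional basis of $d(\ww,q)$ is permutatively equivalent to a subbasis of the canonical basis (or of its square), after which symmetry finishes the proof. Strong absoluteness cannot be the vehicle, since it holds only in the strictly smaller class where $s_m/m\to\infty$.
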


Ari\~no et al.\@ \cite{AEP1988} investigated the Mackey topology of $d(\ww,q)$ in the case where $0<q<\infty$ and $\ww$ is nondecreasing. The memoir \cite{CRS2007} overrode all of these partial results on the Mackey topology by describing the associated space of every Lorentz sequence space (see \cite{CRS2007}*{Theorem 2.4.14}). The convenience of naming Lorentz sequence spaces after their fundamental functions arose within interpolation theory (see, e.g., \cite{CerdaColl2003}). The papers \cites{AABW2021,ABW2023} also contain valuable information regarding Lorentz sequence spaces.

\subsection{Regularity properties of weights}
We say that a nonnegative sequence $\prim=(s_m)_{m=1}^\infty$ is \emph{essentially increasing} (resp., \emph{essentially decreasing}) if $s_m\lesssim s_n$ (resp., $s_n\lesssim s_m$) for $m$, $n\in\NN$ with $m\le n$. Clearly, $\prim$ is essentially increasing (resp., decreasing) if and only if it is equivalent to a nondecreasing (resp, nonincreasing) sequence. If $s_m\lesssim s_n$ (resp., $s_n\lesssim s_m$) for $m$, $n\in\NN$ with $m\le n \le 2m$ we say that $\prim$ \emph{does not move down} (resp., \emph{up}) \emph{steeply}.

\begin{lemma}\label{lem:new}
Let $\prim=(s_m)_{m=1}^\infty$ be a sequence in $[0,\infty)$. Assume that there is $b\in\ZZ\cap[2,\infty)$ such that
$s_m \le s_{bm}$ (resp., $s_{bm}\le s_m$) for all $m\in\NN$.
\begin{enumerate}[label=(\alph*),leftmargin=*]
\item\label{it:new:A} If $\prim$ does not move up (resp., down) steeply, then $\prim$ does not move down (resp., up) steeply.

\item\label{it:new:B} If $\prim$ does not move down (resp., up) steeply, then $\prim$ is essentially increasing (resp., decreasing).
\end{enumerate}
\end{lemma}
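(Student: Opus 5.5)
In both parts I will work with the case $s_m\le s_{bm}$ and treat the ``resp.'' case at the end. The plan has two ingredients. First, iterating the hypothesis gives $s_m\le s_{b^j m}$ for all $j\ge 0$. Second, the local hypothesis is chained over a bounded number of dyadic steps. Fix $r\in\NN$ with $2^r\ge b$. If $u\le v\le bu$, then there are integers $u=n_0\le n_1\le\dots\le n_r=v$ with $n_{i}\le 2n_{i-1}$; for instance, take $n_i=\min\{v,2^i u\}$. So whenever $\prim$ satisfies a two-sided-window estimate with constant $C$ on $[u,2u]$, it satisfies the same type of estimate with constant $C^r$ between $u$ and any $v\in[u,bu]$.

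For \ref{it:new:A}, assume that $s_n\le C s_m$ whenever $m\le n\le 2m$ (no steep upward moves). Given $m\le n\le 2m$, I want $s_m\lesssim s_n$. Since $b\ge 2$, we have $n\le 2m\le bm\le bn$. The chaining step applied on $[n,bm]$, with $bm\le bn$, gives $s_{bm}\le C^r s_n$. Combining this with the hypothesis $s_m\le s_{bm}$ yields $s_m\le C^r s_n$, which is the required ``does not move down steeply'' estimate.

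For \ref{it:new:B}, assume that $s_m\le C s_n$ whenever $m\le n\le 2m$, and let $m\le n$ be arbitrary. Choose $j\ge 0$ with $b^j m\le n<b^{j+1}m$. Iterating the hypothesis gives $s_m\le s_{b^jm}$. Since $n\in[b^jm,b\cdot b^jm]$, chaining the local estimate gives $s_{b^jm}\le C^r s_n$. Hence $s_m\le C^r s_n$, so $\prim$ is essentially increasing.

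The ``resp.'' statements follow by reversing every inequality in the same argument. Under $s_{bm}\le s_m$, the same chains give $s_n\le C^r s_{bm}\le C^r s_m$ in \ref{it:new:A} and $s_n\le C^r s_{b^jm}\le C^r s_m$ in \ref{it:new:B}. The only point needing care is the bookkeeping of the chaining step, namely that the number of doubling steps is bounded independently of $m$ and $n$. This holds because the ratios involved never exceed $b$.
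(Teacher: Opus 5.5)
Your proof is correct and follows essentially the same route as the paper. Both iterate the hypothesis along powers of $b$ and chain the local dyadic estimate over a fixed number $r$ of doubling steps, with $2^r\ge b$: for (a) on the window $[n,bm]$, and for (b) from $b^jm$ to $n$. Your version additionally spells out the chaining sequence $n_i=\min\{v,2^iu\}$ and the ``resp.'' case, both of which the paper leaves implicit.
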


\begin{proof}
We will prove the case when $s_m\le s_{bm}$ for all $m\in\NN$. Pick $j\in\NN$ such that $b\le 2^j$.

To see \ref{it:new:A}, we choose $C\in (0,\infty)$ such that $s_n\le C s_m$ as long as $m\le n \le 2m$. Given $m$, $n\in\NN$ with $m\le n\le 2m$, since $n\le bm \le 2^j n$,
\[
s_m \le s_{b m} \le C^j s_n.
\]

To see \ref{it:new:B}, we choose $D\in (0,\infty)$ such that $s_m\le D s_n$ as long as $m\le n \le 2m$. Given $m$, $n\in\NN$ with $m\le n$, there is $k\in\NN$ such that $b^{k-1}m\le n \le b^km$. Since $n\le 2^j b^{k-1}m$,
\[
s_m\le s_{b^{k-1}m}\le D^j s_n.\qedhere
\]
\end{proof}

The \emph{dual sequence} of the positive sequence $\prim=(s_m)_{m=1}^\infty$ is given by
\[
\prim^*=(m/s_m)_{m=1}^\infty.
\]
We say that $\prim$ \emph{lower regularity property} (LRP for short) if there is an integer $b\ge 2$ such
\begin{equation*}
2 s_m \le s_{bm}, \quad m\in\NN.
\end{equation*}
In turn, we say that $\prim=(s_m)_{m=1}^\infty$ has the \emph{upper regularity property} (URP for short) if
\begin{equation}\label{eq:URP}
s_{bm}\le \frac{b}{2} s_m, \quad m\in\NN,
\end{equation}
for some $b\in\NN$. Note that $\prim$ has the URP if and only if $\prim^*$ has the LRP.
\begin{lemma}\label{lem:URPPower}
Let $\prim=(s_m)_{m=1}^\infty$ be a doubling sequence in $(0,\infty)$. Then there is $\alpha\in(0,\infty)$ such that $\prim^\alpha$ has the URP.
\end{lemma}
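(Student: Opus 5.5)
Since $\prim$ is doubling, there is a constant $C\ge 1$ with $s_{2m}\le C s_m$ for all $m\in\NN$. Iterating $j$ times gives $s_{2^j m}\le C^j s_m$ for all $m$ and all $j\in\NN$. Raising to a power $\alpha>0$ gives
\[
s_{2^j m}^\alpha\le C^{j\alpha} s_m^\alpha,\qquad m\in\NN.
\]
The URP for $\prim^\alpha$ requires an integer $b$ with $s_{bm}^\alpha\le (b/2)s_m^\alpha$. We will take $b=2^j$ for a suitable $j$. It then suffices to arrange
\[
C^{j\alpha}\le 2^{j-1}.
\]

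Fix any $\alpha$ with $0<\alpha<1/\log_2 C$; if $C=1$, any $\alpha>0$ works. Then $C^\alpha<2$, say $C^\alpha=2^{\theta}$ with $0\le\theta<1$. The required inequality becomes $2^{j\theta}\le 2^{j-1}$, that is, $j(1-\theta)\ge 1$. This holds for every sufficiently large $j$, for instance $j\ge 1/(1-\theta)$. Choosing such a $j$ and setting $b=2^j$ gives
\[
s_{bm}^\alpha\le C^{j\alpha}s_m^\alpha\le \frac{b}{2}\,s_m^\alpha\qquad\text{for all } m\in\NN,
\]
which is \eqref{eq:URP} for $\prim^\alpha$.

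The argument uses only the doubling inequality iterated along dyadic multiples, so monotonicity of $\prim$ is not needed. Positivity of the terms ensures that $\prim^\alpha$ is a well-defined positive sequence. The one point requiring care is to choose $\alpha$ small enough that $C^\alpha<2$ strictly. Only then does the linear growth $b/2=2^{j-1}$ eventually beat the geometric factor $C^{j\alpha}$.

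In fact this shows that every $\alpha<1/\log_2 C$ works. In particular, $\alpha$ can be taken as close to the reciprocal of the upper dyadic index of $\prim$ as desired.
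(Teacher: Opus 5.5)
Your proof is correct and is essentially the paper's argument: the paper fixes $\alpha$ with $C^\alpha\le\sqrt{2}$ and iterates the doubling inequality twice to get $s_{4m}^\alpha\le 2s_m^\alpha$, i.e., the URP with $b=4$, which is your argument with $\theta=1/2$ and $j=2$. Your closing remark about the upper dyadic index does not follow from the constant $C$ alone; one would need to replace $C^j$ by $\rho(2^j)$. That remark is not needed for the lemma.
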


\begin{proof}
Set $C=\sup_{m\in \NN} s_{2m}/s_m$. Pick $\alpha>0$ small enough so that $C^\alpha\le \sqrt{2}$. We have $s_{2m}^\alpha \le \sqrt{2} s_m^\alpha$ for all $m\in\NN$. Therefore, $s_{4m}^\alpha \le 2 s_m^\alpha$ for all $m\in\NN$.
\end{proof}

We define the \emph{multiplicative ratio} of $\prim$ as
\[
\rho\colon\NN\to(0,\infty], \quad d\mapsto\rho(d)=\sup_{m\in\NN} \frac{s_{dm}}{s_m}, \quad b\in\NN.
\]
The sequence $\rho$ is \emph{submultiplicative}, that is, $\rho(cd)\le \rho(c) \rho(d)$ for all $c$, $d\in\NN$. In this terminology, $\prim$ is doubling if and only if $\rho(2)<\infty$, and $\prim$ has the URP if and only if $\rho(b)\le b/2$ for some $b\in\NN$.

\begin{lemma}\label{lem:newdos}
Let $\prim=(s_m)_{m=1}^\infty$ be a sequence in $(0,\infty)$. If $\prim$ does not move down steeply, then its increment ratio $\rho$ does not either.
\end{lemma}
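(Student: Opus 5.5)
We must show: there is a constant $C$ with $\rho(d)\le C\rho(e)$ whenever $d\le e\le 2d$. Here $\rho(d)=\sup_m s_{dm}/s_m$ is the multiplicative ratio of $\prim$. The plan is to transfer the hypothesis on $\prim$ directly to each quotient $s_{dm}/s_m$, and then pass to the supremum over $m$.

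Fix $D\in(0,\infty)$ such that $s_k\le D s_n$ whenever $k\le n\le 2k$; this is the hypothesis that $\prim$ does not move down steeply. Now take $d,e\in\NN$ with $d\le e\le 2d$ and any $m\in\NN$. Then $dm\le em\le 2dm$, so applying the hypothesis with $k=dm$ and $n=em$ gives $s_{dm}\le D s_{em}$. Dividing by $s_m>0$ (all terms are positive) yields
\[
\frac{s_{dm}}{s_m}\le D\,\frac{s_{em}}{s_m}\le D\,\rho(e).
\]
Taking the supremum over $m\in\NN$ gives $\rho(d)\le D\rho(e)$.

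The inequality remains valid in $(0,\infty]$ when $\rho(e)=\infty$, so no finiteness assumption on $\rho$ is needed. The constant $D$ is the same one as for $\prim$.

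No genuine obstacle is expected; the only point to check is that the scaling $k\mapsto km$ preserves the window condition $k\le n\le 2k$. It does, because the condition is homogeneous under multiplication by $m$.
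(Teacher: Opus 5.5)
Your proof is correct and is essentially the paper's argument. Both apply the hypothesis at the scaled indices $dm\le em\le 2dm$, divide by $s_m$, and take the supremum over $m$. The paper's displayed chain omits the factor $C$ in the middle step, while yours writes $\rho(d)\le D\rho(e)$ correctly.
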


\begin{proof}
Set $C=\sup_{m\le n\le 2m} s_m/s_n$. Let $b\le d \le 2b$. Given $m\in\NN$,
\[
s_{bm}\le C s_{dm} \le \rho(d) s_m.
\]
Hence, $\rho(b) \le C \rho(d)$.
\end{proof}

Loosely speaking, an increasing sequence has the URP if and only if it increases steadily and its multiplicative ratio is smaller than that of $\primp_1$. Further, the URP is related to Dini's regularity.

\begin{lemma}\label{lem:URPImproved}
Let $\prim=(s_m)_{m=1}^\infty$ and $\primt$ be sequences in $(0,\infty)$ with $\primt\approx\prim$. Choose $\prim_\alpha=(s_m/m^\alpha)_{m=1}^\infty$ for all $0<\alpha\le 1$ and consider the sequence $\primv=(v_m)_{m=1}^\infty$ defined by
\[
v_m=\sum_{n=1}^m \frac{1}{s_n}.
\]
Let $\rho$ be the multiplicative ratio of $\prim$. Assuming that $\prim$ does not move down steeply, the following are equivalent.

\begin{enumerate}[label=(\alph*), leftmargin=*]
\item\label{eq:URP:A} $\prim$ has the URP.
\item\label{eq:URP:B} $\lim_d \rho(d)/d=0$.
\item\label{eq:URP:C} $\inf_d \rho(d)/d=0$.
\item\label{eq:URP:D} $\inf_d \rho(d)/d<1$.
\item There is $0<\alpha<1$ such that $\prim_\alpha$ is essentially decreasing.
\item $\primt$ has the URP.
\end{enumerate}
Moreover, if $\prim$ has the URP, then $\primv\lesssim\prim^*$ and $\prim^*$ is essentially increasing.
\end{lemma}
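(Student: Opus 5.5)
I will prove the cycle \ref{eq:URP:A}$\Rightarrow$\ref{eq:URP:D}$\Rightarrow$\ref{eq:URP:C}$\Rightarrow$\ref{eq:URP:B}$\Rightarrow$\ref{eq:URP:A}. Then I will link (e) and (f) to this cycle, and finally derive the ``moreover'' part.

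First, \ref{eq:URP:A}$\Rightarrow$\ref{eq:URP:D} is immediate: the URP gives $\rho(b)\le b/2$, so $\rho(b)/b\le 1/2<1$. For \ref{eq:URP:D}$\Rightarrow$\ref{eq:URP:C}, pick $b$ with $\rho(b)\le \theta b$ and $\theta<1$. Submultiplicativity gives $\rho(b^k)\le \theta^k b^k$, so $\rho(b^k)/b^k\to0$.

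For \ref{eq:URP:C}$\Rightarrow$\ref{eq:URP:B}, I use Lemma~\ref{lem:newdos}: $\rho$ does not move down steeply, with some constant $C$. Given $\varepsilon>0$, choose $b$ with $\rho(b)\le \varepsilon b$. For $d\ge b$ write $b^k\le d<b^{k+1}$. Submultiplicativity and a bounded number (depending on $b$) of doubling steps, each costing the factor $C$, should give $\rho(d)\lesssim_b \rho(b^k)\le \varepsilon^k d$. More carefully, $\rho(d)\le C^{j}\rho(b^{k+1})$, where $j$ is fixed with $b\le 2^j$. This yields $\rho(d)/d\le C^j b\,\varepsilon^{k+1}\to0$ as $d\to\infty$. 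The implication \ref{eq:URP:B}$\Rightarrow$\ref{eq:URP:A} is then trivial: take $b$ with $\rho(b)\le b/2$.

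For (e): assume the URP with $\rho(b)\le b/2$ and put $\alpha=\log_b(b/2)\in(0,1)$, so $\rho(b^k)\le b^{k\alpha}$. For $m\le n$, write $b^k m\le n<b^{k+1}m$. Since $\prim$ does not move down steeply, $\rho$ is controlled as above, and I get $s_n\lesssim \rho(b^{k+1}) s_m\lesssim (n/m)^\alpha s_m$. Hence $\prim_\alpha$ is essentially decreasing. Conversely, if $s_n/n^\alpha\le C s_m/m^\alpha$ for $m\le n$, then $\rho(d)\le C d^\alpha$, so $\rho(d)/d\to0$, which is \ref{eq:URP:B}. For (f): $\primt\approx\prim$ implies that $\primt$ does not move down steeply, and that its multiplicative ratio satisfies $\tilde\rho\approx\rho$. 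Condition \ref{eq:URP:B} is invariant under such equivalence, so applying the equivalence \ref{eq:URP:A}$\Leftrightarrow$\ref{eq:URP:B} to both sequences gives (a)$\Leftrightarrow$(f).

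For the ``moreover'' part, (e) gives $s_n\gtrsim (n/m)^{\alpha}s_m$ reversed in the form $1/s_n\lesssim (m/n)^\alpha/s_m$ for $n\le m$. Summing,
\[
v_m\lesssim \frac{m^\alpha}{s_m}\sum_{n=1}^m n^{-\alpha}\approx \frac{m}{s_m},
\]
since $\alpha<1$. Also, $n/s_n\gtrsim (n/m)^{1-\alpha}\, m/s_m\ge m/s_m$ for $n\ge m$, so $\prim^*$ is essentially increasing.

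The main obstacle is the step \ref{eq:URP:C}$\Rightarrow$\ref{eq:URP:B} and the analogous passage from powers of $b$ to arbitrary integers in (e). Both rely on correctly combining Lemma~\ref{lem:newdos} with submultiplicativity; everything else is bookkeeping.
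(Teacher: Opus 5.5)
Your proposal is correct and follows essentially the paper's proof: submultiplicativity of $\rho$ together with Lemma~\ref{lem:newdos} for (a)--(d), a chaining argument for (a)$\Rightarrow$(e) (which the paper packages as Lemma~\ref{lem:new}), the bound $\rho(d)\lesssim d^\alpha$ for the converse, and the same summation $v_m\lesssim m^\alpha s_m^{-1}\sum_{n\le m} n^{-\alpha}$ for the ``moreover'' part. There is one small slip: your choice $\alpha=\log_b(b/2)$ equals $0$ when $b=2$ (for instance, for a constant $\prim$); instead take, as the paper does, any $\alpha\in(0,1)$ with $b^{1-\alpha}\le 2$, which still gives $\rho(b^k)\le \rho(b)^k\le b^{k\alpha}$.
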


\begin{proof}
Set $\prim^*=(s_m^*)_{m=1}^\infty$ and $C=\sup_{m\le n\le 2m} s_m/s_n$.

Assume that $d\in\ZZ\cap[2,\infty)$ is such that $\rho(d)/d<1$. Since $\rho$ is submultiplicative, $\lim_k d^{-k}\rho(d^k)=0$. By Lemma~\ref{lem:newdos}, $\lim_b \rho(b)/b=0$. This proves that \ref{eq:URP:A}, \ref{eq:URP:B}, \ref{eq:URP:C} and \ref{eq:URP:D} are equivalent. In turn, these equivalences yield that $\prim$ has the URP if and only if $\primt$ has the URP.\\
Assume that $\prim$ has the URP, and let $b\in\NN$ be as in \eqref{eq:URP}. Pick $0<\alpha<1$ such that $b^{1-\alpha}\le 2$. Set $u_m=s_m/m^\alpha$ for all $m\in\NN$. We have $u_{bm} \le u_m$ for all $m\in\NN$. Since $\prim_\alpha$ inherits from $\prim$ the property of not moving down steeply, $\prim_\alpha$ is essentially decreasing by Lemma~\ref{lem:new}.

Assume now that $0<\alpha<1$ is such that $D:=\sup_{n\le m} u_m/u_n<\infty$. Then $\sup_{n\le m} s^*_n/s_m^*\le D$. For all $m\in\NN$ we have
\[
v_m
=\sum_{n=1}^m \frac{1}{n^\alpha u_n}
\le D \frac{1}{u_m} \sum_{n=1}^m \frac{1}{n^\alpha}
\le \frac{D}{1-\alpha} \frac{m^{1-\alpha}}{u_m}
=\frac{D}{1-\alpha} \frac{m}{s_m}
=\frac{D}{1-\alpha} s_m^*.
\]
Let $b\in\NN$ be such that $2D\le b^{1-\alpha}$. For all $m\in\NN$ we have
\[
s_{bm} \le D b^\alpha s_m \le \frac{b}{2} s_m.
\]
Hence, $\prim$ has the URP.
\end{proof}

\begin{lemma}\label{lem:DiniLRP}
Let $\prim=(s_m)_{m=1}^\infty$ be a sequence in $(0,\infty)$. Let $\primv=(v_m)_{m=1}^\infty$ be as in Lemma~\ref{lem:URPImproved}. If $\prim$ does not move up steeply and $\primv\lesssim \prim^*$, then $\primv$ has the LRP.
\end{lemma}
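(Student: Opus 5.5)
We need to show there is an integer $b\ge 2$ with $2v_m\le v_{bm}$ for all $m\in\NN$. Since $v_{bm}=v_m+\sum_{n=m+1}^{bm}1/s_n$, this amounts to finding $b$ such that
\[
\sum_{n=m+1}^{bm}\frac{1}{s_n}\ge v_m, \quad m\in\NN.
\]
The plan is to bound the left-hand side from below using only the fact that $\prim$ does not move up steeply, and to bound $v_m$ from above using the hypothesis $\primv\lesssim\prim^*$.

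First, the lower bound. Let $C=\sup_{m\le n\le 2m} s_n/s_m<\infty$. Iterating over dyadic blocks, for $m<n\le 2^k m$ we get $s_n\le C^k s_m$. Hence, taking $b=2^k$,
\[
\sum_{n=m+1}^{2^k m}\frac{1}{s_n}\ge \sum_{j=1}^{k}\sum_{n=2^{j-1}m+1}^{2^j m}\frac{1}{s_n}\ge \sum_{j=1}^k \frac{2^{j-1}m}{C^j s_m}.
\]
This crude estimate gains nothing if $C\ge 2$, so I will refine it as follows. Write $A=\sup_m v_m s_m/m<\infty$, which is finite by the hypothesis $\primv\lesssim\prim^*$. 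On the one hand, each block satisfies
\[
\sum_{n=2^{j-1}m+1}^{2^jm}\frac{1}{s_n}\ge \frac{2^{j-1}m}{C s_{2^{j-1}m}}.
\]
On the other hand, $s_{2^{j-1}m}\le A\, 2^{j-1}m/v_{2^{j-1}m}$. Combining the two gives
\[
v_{2^j m}-v_{2^{j-1}m}\ge \frac{v_{2^{j-1}m}}{AC}.
\]
Thus $v_{2^jm}\ge \bigl(1+\tfrac{1}{AC}\bigr)v_{2^{j-1}m}$, and iterating yields
\[
v_{2^km}\ge \Bigl(1+\frac{1}{AC}\Bigr)^k v_m.
\]

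Finally, choose $k$ with $(1+1/(AC))^k\ge 2$ and set $b=2^k$. Then $2v_m\le v_{bm}$ for all $m$, which is the LRP for $\primv$.

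The key step, and the only real obstacle, is avoiding the useless geometric bound above. It is overcome by the observation that $\primv\lesssim\prim^*$ is equivalent to $1/s_m\gtrsim v_m/m$, that is, each new term is comparable to the average of the previous ones. Combined with the fact that $s$ does not jump up within a doubling block, this forces $\primv$ to grow by a fixed factor on each dyadic block.
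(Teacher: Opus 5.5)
Your proof is correct and is essentially the paper's argument. With $C=\sup_{m\le n\le 2m}s_n/s_m$ and $A=\sup_m v_ms_m/m$ you obtain $v_{2m}-v_m\ge v_m/(AC)$, which is exactly the paper's inequality $(v_{2m}-v_m)/v_m\ge 1/(CD)$; you simply spell out the iteration $v_{2^km}\ge(1+1/(AC))^kv_m$ that the paper leaves implicit, and the crude dyadic estimate you discard at the start is unnecessary but harmless.
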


\begin{proof}
Set $C=\sup_{m\le n \le 2m} s_n/s_m<\infty$ and $D=\sup_m s_m\, v_m/m<\infty$. For all $m\in\NN$ we have
\[
\frac{v_{2m}-v_m}{v_m}\ge c:=\frac{1}{CD}.
\]
Therefore, $v_{2m} \ge (1+c) v_m$ and we infer that $\primv$ has the LRP.
\end{proof}

\begin{proposition}[cf.\@ \cite{AlbiacAnsorena2016}*{Lemma 2.12}]\label{prop:URPDini}
Assume that a sequence $\prim$ in $(0,\infty)$ is essentially increasing. Let $\primv$ as in Lemma~\ref{lem:URPImproved}. The following are equivalent.

\begin{enumerate}[label=(\alph*), leftmargin=*]
\item\label{eq:URP:AAA} $\prim$ has the URP.
\item\label{eq:URP:DDD} $\primv\lesssim\prim^*$.
\item\label{eq:URP:EEE} $\primv\approx\prim^*$.
\end{enumerate}
\end{proposition}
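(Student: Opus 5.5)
We prove the cycle (a)$\Rightarrow$(b)$\Rightarrow$(c)$\Rightarrow$(a), after reducing to the case of a nondecreasing sequence. Since $\prim$ is essentially increasing, it is equivalent to a nondecreasing sequence $\primt$. Each of the three conditions is invariant under passing to an equivalent sequence. For (a) this is the last equivalence in Lemma~\ref{lem:URPImproved}; that lemma requires $\prim$ not to move down steeply, which follows from being essentially increasing. For (b) and (c), $\primt\approx\prim$ gives $1/\tilde s_n\approx 1/s_n$, so the associated sequences $\primv$ are equivalent, and likewise the dual sequences $\prim^*$ are equivalent. Hence we may assume $\prim$ nondecreasing.

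The implication (a)$\Rightarrow$(b) is exactly the ``moreover'' part of Lemma~\ref{lem:URPImproved}. The implication (c)$\Rightarrow$(b) is trivial.

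For the converse direction we prove (b)$\Rightarrow$(c) and then (b)$\Rightarrow$(a).

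\textbf{(b)$\Rightarrow$(c).} Since $\prim$ is nondecreasing,
\[
v_m=\sum_{n=1}^m \frac{1}{s_n}\ge \frac{m}{s_m}=s_m^*,
\]
so $\prim^*\le\primv$ and (c) follows.

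\textbf{(b)$\Rightarrow$(a).} Let $D=\sup_m s_m v_m/m<\infty$.

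\emph{Step 1: $\primv$ has the LRP.} Since $\prim$ is nondecreasing,
\[
v_{2m}-v_m\ge \frac{m}{s_{2m}}.
\]
If $\prim$ is doubling, then $\prim$ does not move up steeply, and Lemma~\ref{lem:DiniLRP} gives the LRP for $\primv$. However, doubling is not among the hypotheses, so we argue directly. Pick an integer $b\ge 2$. Then
\[
v_{bm}\ge v_m+\frac{(b-1)m}{s_{bm}}.
\]
This is awkward because of $s_{bm}$. It is easier to use the other end of the sum:
\[
v_{bm}=\sum_{n\le bm}\frac{1}{s_n}\ge \frac{bm}{s_{bm}}=s^*_{bm},
\]
together with $v_m\le D s_m^*$. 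What we actually want is an inequality of the form
\[
s_{bm}\le \frac{b}{2}s_m, \qquad\text{equivalently}\qquad s^*_{bm}\ge 2s^*_m .
\]
The chain of inequalities then runs
\[
s^*_{bm}\ge D^{-1}v_{bm}\ge D^{-1}\lambda\, v_m\ge D^{-1}\lambda\, s^*_m,
\]
where $\lambda$ is any constant with $v_{bm}\ge\lambda v_m$. So it suffices to show that $\primv$ grows geometrically along multiplication by $b$, with a factor exceeding $2D$.

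\emph{Step 2: geometric growth of $\primv$ from the Dini-type bound.} Let $k\le m$. Since $v_m\le D m/s_m$, we have
\[
\frac{1}{s_m}\ge \frac{v_m}{Dm}.
\]
Because $\prim$ is nondecreasing, $1/s_n\ge 1/s_m$ for $n\le m$, and summing gives
\[
v_{2m}-v_m\ge \frac{m}{s_{2m}}\ge \frac{v_{2m}}{2D}.
\]
Hence
\[
v_{2m}\ge \Bigl(1-\frac{1}{2D}\Bigr)^{-1}v_m .
\]
Iterating $j$ times yields
\[
v_{2^jm}\ge \Bigl(1-\frac{1}{2D}\Bigr)^{-j}v_m ,
\]
so $\lambda$ can be made as large as we like by taking $j$ large. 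Choose $j$ with $(1-1/(2D))^{-j}\ge 2D$ and put $b=2^j$. Step 1 then gives $s^*_{bm}\ge 2s^*_m$, which is the URP with this $b$.

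\textbf{Expected obstacle.} The main point is Step 2, namely extracting geometric growth of $\primv$ solely from $\primv\lesssim\prim^*$ and monotonicity, without assuming doubling. The trick is to bound the block $v_{2m}-v_m$ from below using $s_{2m}$ itself, and then to apply the hypothesis at the index $2m$ rather than at $m$.
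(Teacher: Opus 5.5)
Your proof is correct. Its architecture matches the paper's: (a)$\Rightarrow$(b) comes from Lemma~\ref{lem:URPImproved}, and (b)$\Rightarrow$(c) from the trivial bound $\prim^*\lesssim\primv$. For the converse, both arguments show that $\primv$ grows geometrically and then transfer this to the URP of $\prim$. The differences are in how each of these two converse steps is carried out.

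\emph{Geometric growth of $\primv$.} The paper invokes Lemma~\ref{lem:DiniLRP}, which needs $\prim$ not to move up steeply. You worried that this would require doubling, but the paper gets it for free from (c). Since $\primv$ is nondecreasing, $\primv^*=(m/v_m)_{m=1}^\infty$ does not move up steeply, and $\primv^*\approx\prim$. You sidestep the issue differently, by applying the hypothesis at the index $2m$ rather than $m$. This gives $v_{2m}-v_m\ge m/s_{2m}\ge v_{2m}/(2D)$, hence $v_{2m}\ge(1-1/(2D))^{-1}v_m$, using nothing beyond monotonicity. Your argument shows that the ``no steep upward move'' condition is not actually needed here.

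\emph{Transfer to the URP.} The paper passes through the LRP/URP duality and the invariance of the URP under equivalence. Your chain $s^*_{bm}\ge D^{-1}v_{bm}\ge D^{-1}\lambda v_m\ge D^{-1}\lambda s^*_m$ yields the URP directly, with an explicit $b=2^j$.

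Your route is thus more self-contained and quantitative, while the paper's is shorter because it reuses its earlier lemmas. One presentational point: your ``Step 1'' does not prove that $\primv$ has the LRP, as its heading claims. It only reduces the URP to sufficiently fast growth of $\primv$. The exploratory remarks there (``This is awkward\ldots'') and the unused ``Let $k\le m$'' in Step 2 should be cleaned up in a final write-up.
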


\begin{proof}
Lemma~\ref{lem:URPImproved} gives that \ref{eq:URP:AAA} implies \ref{eq:URP:DDD}. Our assumption gives $\prim^*\lesssim \primv$, whence \ref{eq:URP:DDD} implies \ref{eq:URP:EEE}. Assume that \ref{eq:URP:EEE} holds. Since $\primv$ is nondecreasing and $\primv^* \approx \prim$, $\prim$ does not move up steeply. By Lemma~\ref{lem:DiniLRP}, $\primv$ has the LRP. Therefore, $\primv^*$ has the URP. Consequently, $\prim$ has the URP as well.
\end{proof}
%----------------------------
\subsection{Boundedness of the averaging projections}
%--------------------------
Roughly speaking, the strategy to estimate the norm of the averages is as follows. Given an averaging projection, one splits the averaging blocks into two classes. The contribution of the unfavorable blocks is controlled through a comparison argument that exploits the symmetry of the canonical basis together with the regularity of the weight. The favorable blocks, on the other hand, can be estimated directly from the Lorentz quasi-norm by a rearrangement argument. This yields uniform boundedness of all averaging projections under lower and upper regularity assumptions on the weight.

Let us first introduce some additional terminology that we will need. The \emph{support} of $ f=(a_n)_{n=1}^\infty\in\FF^\NN$ will be the support of $f$ relative to the unit vector system, that is,
\[
\supp(f)=\enbrace{n\in\NN \colon a_n\not=0}.
\]
We denote by $\Dt$ the class of all families $\At=(A_j)_{j\in J}$ such that
\begin{itemize}
\item $A_j\subset \NN$ and $1\le \abs{A_j}<\infty$ for each $j\in J$;
\item $A_j\cap A_k=\emptyset$ whenever $j$, $k\in J$ and $j\not=k$.
\end{itemize}
Given such a family $\At$, we set $R(\At)=\bigcup_{j\in J} A_j$ and we denote by
\[
V_\At\colon \FF^\NN \to \FF^\NN
\]
the averaging projection relative to $\At$ and the unit vector system of $\FF^\NN$. If $\XX$ is an atomic quasi-Banach lattice and we regard $V_\At$ as an operator from $\XX$ into $\XX$, we set $V_\At=V_\At[\XX]$. The possibility that $J$ might be finite (or even empty) is not excluded. As we will use the convention that any sum over the empty set is zero, in the latter case, $V_\At$ is the zero operator

Given to families $\At=(A_j)_{j\in J}$ and $(B_j)_{j\in J}$ in $\Dt$ modeled over the same set $J$, we say that $\At\le\Bt$ if $A_j\subset B_j$ for all $j\in J$.

We record below some elementary facts about averaging projections on atomic quasi-Banach lattices that we will use below.

\begin{enumerate}[label=(A.\arabic*),leftmargin=*]
\item\label{it:positive} $V_\At$ is positive operator, that is, $\abs{V_\At(f)} \le V_\At(\abs{f})$ for all $f\in \FF^\NN$.
\item\label{it:perm} If $\Bt$ is a permutation of $\At$ then $V_\Bt=V_\At$.
\end{enumerate}

If a nondecreasing sequence $\prim$ in $(0,\infty)$ has the URP, then $\prim$ is doubling because
\[
s_{2m}\le s_{bm}\le \frac{b}{2} s_m,\qquad m\in \NN.
\]
Thus, $d_q(\prim)$ is a quasi-Banach space. The following result will allow us to restrict ourselves to spaces $d_q(\prim)$ where $\prim$ is a decreasing weight.

\begin{lemma}\label{lem:change}
Let $0<q\le 1$ and $\prim=(s_n)_{n=1}^\infty$ be a nondecreasing sequence with the URP and the LRP. Then there is a nonincreasing weight $\ww=(w_n)_{n=1}^\infty$ such that $d_q(\prim)=d(\ww,q)$.
\end{lemma}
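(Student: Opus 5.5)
The plan is to exploit the fact that $d_q(\prim)$ depends only on the equivalence class of $\prim$. It therefore suffices to find a nonincreasing weight $\ww=(w_n)_{n=1}^\infty$ whose partial sums $t_m=\sum_{n=1}^m w_n$ satisfy $t_m\approx s_m^q$. Indeed, by \eqref{eq:ws} we have $d(\ww,q)=d_q(\primt)$ with $\primt=(t_m^{1/q})_{m=1}^\infty$, and $\primt\approx\prim$ gives $d_q(\primt)=d_q(\prim)$. So I will work with $\primv$-free notation $u_m=s_m^q$ and aim to realize $u$, up to equivalence, as the partial sums of a nonincreasing sequence.

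\textbf{Step 1: regularity of $u$.} The sequence $u=\prim^q$ is nondecreasing and inherits both regularity properties.

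For the URP, let $\rho$ be the multiplicative ratio of $\prim$. The multiplicative ratio of $u$ is $\rho^q$. Since $\prim$ has the URP and is nondecreasing, Lemma~\ref{lem:URPImproved} gives $\rho(d)/d\to 0$. Because $q\le 1$, we get
\[
\frac{\rho(d)^q}{d}=\Bigl(\frac{\rho(d)}{d}\Bigr)^q d^{q-1}\to 0,
\]
so $u$ has the URP by Lemma~\ref{lem:URPImproved} again. The moreover part of that lemma then shows that $u^*=(m/u_m)_{m=1}^\infty$ is essentially increasing; equivalently, $(u_m/m)_{m=1}^\infty$ is essentially decreasing.

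For the LRP, from $2s_m\le s_{bm}$ we get $c\,u_m\le u_{bm}$ with $c=2^q>1$.

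\textbf{Step 2: construct the weight.} Define
\[
w_n=\min_{1\le k\le n}\frac{u_k}{k}.
\]
This sequence is nonincreasing by construction. It is also equivalent to $(u_n/n)_{n=1}^\infty$, because $(u_n/n)$ is essentially decreasing by Step 1. Hence $t_m\approx\sum_{n=1}^m u_n/n$, and what remains is to show that
\[
\sum_{n=1}^m \frac{u_n}{n}\approx u_m .
\]

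\textbf{Step 3: two-sided estimate.} The lower bound is easy. Keep only the terms with $m/2<n\le m$; since $u$ is nondecreasing, this gives $\gtrsim u_{\lceil m/2\rceil}$. Since $u$ is doubling (the URP implies doubling for nondecreasing sequences), this is $\gtrsim u_m$.

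For the upper bound, split $[1,m]$ into $b$-adic blocks $(m b^{-j-1},m b^{-j}]$, $j\ge 0$. On the $j$-th block the terms sum to at most
\[
u_{\lfloor m b^{-j}\rfloor}\sum_{mb^{-j-1}<n\le mb^{-j}}\frac1n\lesssim u_{\lfloor m b^{-j}\rfloor}\log b .
\]
Iterating the LRP gives $u_{\lfloor m b^{-j}\rfloor}\lesssim c^{-j}u_m$. Summing the geometric series over $j$ yields $\lesssim u_m$.

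The main obstacle is the integer-part bookkeeping in the iteration of the LRP at non-multiples of $b^j$. I would handle it by first replacing $u$ with the equivalent sequence obtained by monotonicity between consecutive multiples, using doubling. Apart from that, every step reduces to Lemma~\ref{lem:URPImproved} together with the transfer of URP and LRP to the power $q\le 1$. Once $t_m\approx u_m=s_m^q$ is established, the identification $d(\ww,q)=d_q(\prim)$ follows as explained at the start.
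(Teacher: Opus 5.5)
Your proof is correct, and its skeleton is the paper's. You choose a nonincreasing weight equivalent to $(s_n^q/n)_{n=1}^\infty$; your running minimum is an explicit such choice, legitimate because the URP of $\prim^q$ makes $(s_n^q/n)_{n=1}^\infty$ essentially decreasing. You then prove $\sum_{n=1}^m s_n^q/n\approx s_m^q$.

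Where you differ is in this last estimate. The paper gets it by passing to the dual sequence. The LRP of $\prim^q$ is the URP of $(\prim^q)^*$, which is also essentially increasing. So Proposition~\ref{prop:URPDini} applied to $(\prim^q)^*$ gives exactly $\sum_{n\le m} s_n^q/n\approx m/(m/s_m^q)=s_m^q$. This takes two lines, but it leans on the Dini-type characterization of the URP and on URP/LRP duality. You instead prove the upper bound directly, using a $b$-adic decomposition and the geometric decay obtained by iterating the LRP of $u=\prim^q$. Your route is self-contained and avoids the duality. It also makes visible that the upper bound uses only the LRP and monotonicity of $u$; the URP enters only to make $(u_n/n)_{n=1}^\infty$ essentially decreasing and $u$ doubling.

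The integer-part issue you single out as the main obstacle is not one, so no modification of $u$ is needed:
\begin{itemize}
\item $b\lfloor mb^{-j}\rfloor$ is an integer not exceeding $mb^{-j+1}$, hence $b\lfloor mb^{-j}\rfloor\le\lfloor mb^{-j+1}\rfloor$.
\item Monotonicity of $u$ and the LRP then give $u_{\lfloor mb^{-j}\rfloor}\le c^{-1}u_{\lfloor mb^{-j+1}\rfloor}$.
\item Induction yields $u_{\lfloor mb^{-j}\rfloor}\le c^{-j}u_m$ for every $j$ with $mb^{-j}\ge 1$.
\end{itemize}
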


\begin{proof}
By Lemma~\ref{lem:URPImproved}, $\prim^q$ has the URP and the LRP. Hence, also by Lemma~\ref{lem:URPImproved}, $(\prim^q)^*$ is essentially increasing and has the URP. Let $\ww=(w_m)_{m=1}^\infty$ be a nonincreasing weight equivalent to $(s_m^q/m)_{m=1}^\infty$. By Proposition~\ref{prop:URPDini},
\[
\sum_{n=1}^m w_n \approx \sum_{n=1}^m \frac{s_n^q}{n} \approx s_m^q, \quad m\in\NN.
\]
Therefore, $d_q(\prim)=d(\ww,q)$.
\end{proof}

Next we see a technical lemma whose proof relies on the rearrangement inequality.
\begin{lemma}\label{lem:key}
Let $0<q<\infty$, $\ww=(w_n)_{n=1}^\infty$ be a nonincreasing weight, $f=(a_n)_{n=1}^\infty\in c_{00}^+$ ,and $b$, $d$, $k\in\NN$. Let $\rho$ be the multiplicative ratio of $\primt=(t_m)_{m=1}^\infty$, where
\[
t_m=\sum_{j=1}^m w_n.
\]
Let $(m_j)_{j=0}^k$ be a nondecreasing $(k+1)$-tuple in $\ZZ$ with $m_0=0$. Let $J\subset\ZZ\cap[1,k]$, $(A_j)_{j\in J}\in \Dt$, and $(x_j)_{j\in J}$ be a nonincreasing sequence in $[0,\infty)$. Assume that
\[
b \abs{A_j} \le m_j-m_{j-1} \le d \abs{A_j}
\]
for all $j\in J$, and that $x_j \le a_n$ for all $j\in J$ and $n\in A_j$. Then
\begin{align*}
\sum_{j\in J} x_j^q \enpar{t_{m_j} - t_{m_{j-1}}} \le \frac{d \rho(b)}{b} \norm{f}_{(\ww,q)}^q.
\end{align*}
\end{lemma}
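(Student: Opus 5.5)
Write $J=\{j_1<j_2<\dots<j_r\}$ and set $l_0=0$, $l_i=\sum_{h=1}^{i}\abs{A_{j_h}}$ for $1\le i\le r$. Let $D(f)=(a_n^*)_{n=1}^\infty$, so that $\norm{f}_{(\ww,q)}^q=\sum_n (a_n^*)^q w_n$. The proof chains three inequalities.

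\emph{Step 1 (rearrangement).} I claim that $\sum_{i=1}^r x_{j_i}^q\,(t_{l_i}-t_{l_{i-1}})\le \norm{f}_{(\ww,q)}^q$.

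\begin{itemize}
\item Since $(x_j)$ is nonincreasing and $x_{j_h}\le a_n$ for $n\in A_{j_h}$, every $n\in A_{j_1}\cup\dots\cup A_{j_i}$ satisfies $a_n\ge x_{j_i}$.
\item These sets are pairwise disjoint and their union has $l_i$ elements. Hence $a^*_n\ge x_{j_i}$ for all $n\le l_i$.
\item In particular, $a^*_n\ge x_{j_i}$ for $l_{i-1}<n\le l_i$. Summing $(a_n^*)^q w_n$ over these blocks gives the claim, since $t_{l_i}-t_{l_{i-1}}=\sum_{l_{i-1}<n\le l_i}w_n$.
\end{itemize}

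\emph{Step 2 (comparing the blocks $(m_{j-1},m_j]$ with $(b\,l_{i-1},b\,l_i]$).} Take $j=j_i$.

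\begin{itemize}
\item Since the $m$'s are nondecreasing and each earlier index $j_h\in J$ contributes a gap of at least $b\abs{A_{j_h}}$, we get $m_{j-1}\ge \sum_{h<i}(m_{j_h}-m_{j_h-1})\ge b\,l_{i-1}$.
\item Also $m_j-m_{j-1}\le d\abs{A_j}$.
\item Because $\ww$ is nonincreasing, shifting a window of $w$'s to the left can only increase its sum. Hence $t_{m_j}-t_{m_{j-1}}\le \sum_{n=b l_{i-1}+1}^{b l_{i-1}+d\abs{A_j}} w_n$.
\item Note $d\ge b$, which follows from the hypothesis since $\abs{A_j}\ge1$. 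For a nonincreasing finite sequence, the average of its first $N$ terms is at most the average of its first $M\le N$ terms. Applying this with $N=d\abs{A_j}$ and $M=b\abs{A_j}$ gives
\[
t_{m_j}-t_{m_{j-1}}\le \frac{d}{b}\,\bigl(t_{b l_i}-t_{b l_{i-1}}\bigr).
\]
\end{itemize}

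\emph{Step 3 (multiplicative ratio via Abel summation).} Put $y_i=x_{j_i}^q$, which is nonincreasing, and $y_{r+1}=0$.

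\begin{itemize}
\item Abel summation gives $\sum_i y_i\,(t_{b l_i}-t_{b l_{i-1}})=\sum_i (y_i-y_{i+1})\,t_{b l_i}$, with all coefficients $y_i-y_{i+1}\ge 0$.
\item Since $t_{b l_i}\le \rho(b)\,t_{l_i}$, reversing the summation bounds this by $\rho(b)\sum_i y_i\,(t_{l_i}-t_{l_{i-1}})$.
\end{itemize}

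Combining Steps 2, 3 and 1 yields the stated inequality with constant $d\rho(b)/b$.

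The step requiring most care is Step 2. The lower bound $m_{j-1}\ge b\,l_{i-1}$ must use only the indices in $J$, not all of $[1,k]$, together with monotonicity of the $m$'s. The window comparison must use both that $\ww$ is nonincreasing and that $d\ge b$.
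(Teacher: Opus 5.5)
Your proof is correct and is essentially the paper's argument: the same rearrangement bound (your $l_i$ are the paper's $r_j$), the same window-by-window comparison of $t_{m_j}-t_{m_{j-1}}$ with $t_{b l_i}-t_{b l_{i-1}}$ coming from the concavity of $(t_m)$, and Abel summation to bring in $\rho(b)$; you phrase the comparison as shifting windows and comparing running averages, while the paper uses a slope inequality. The only difference is bookkeeping: the paper first bounds the partial sums $\sum_{i\le j}(t_{m_i}-t_{m_{i-1}})\le \frac{d\rho(b)}{b}\, t_{r_j}$ and applies Abel summation once at the end, whereas you apply it directly at the $\rho(b)$ step.
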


\begin{proof}
The primitive sequence $\primt$ satisfies the convexity inequality
\begin{equation}\label{eq:convex}
\frac{t_{l_1}-t_{l_2}}{l_1-l_2} \le \frac{t_{l_3}-t_{l_4}}{l_3-l_4}, \quad l_3\le l_1<l_2, \, l_3<l_4\le l_2.
\end{equation}
Put $t_0=0$, $r_0=0$, $r=\sum_{j\in J} \abs{A_j}$, and
\[
r_j=\sum_{\substack{i\in J\\ i \le j}} \abs{A_i}, \quad j\in J.
\]
Set $\pi(j)=0$ if $j=\min(J)$. For any other $j\in J$ let $\pi(j)$ be its predecessor in $J$.
Pick a bijection
\[
\psi\colon \ZZ\cap [1,r] \to \bigcup_{j\in J} A_j
\]
such that $\psi(n)\in A_j$ if $r_{\pi(j)}<n\le r_j$. By the rearrangement inequality,
\[
\norm{f}_{(\ww,q)}^q \ge \sum_{n=1}^r a_{\psi(n)}^q w_n
\ge
\sum_{j\in J} x_j^q \enpar{t_{r_j}-t_{r_{\pi(j)}}}.
\]
Since $(m_j)_{j=0}^k$ is nondecreasing, for any $j\in J$ we have
\[
b\, r_j
\le\sum_{\substack{i\in J\\ i \le j}} m_i-m_{i-1}
\le \sum_{i=1}^{j} m_i-m_{i-1}
=m_j.
\]
Consequently, $b\, r_{\pi(j)} \le m_{\pi(j)} \le m_{j-1}$ for all $j\in J$. By \eqref{eq:convex},
\begin{multline*}
\sum_{\substack{i\in J\\ i \le j}} t_{r_i}-t_{r_{\pi(i)}}
=t_{r_j}\ge \frac{1}{\rho(b)} t_{br_j}
=\frac{1}{\rho(b)} \sum_{\substack{i\in J\\ i \le j}} t_{b r_i}-t_{br_{\pi(i)}}\\
\ge \frac{1}{\rho(b)} \sum_{\substack{i\in J\\ i \le j}} \frac{b \abs{A_i}}{m_i-m_{i-1}} \enpar{ t_{m_i}-t_{m_{i-1}}}
\ge \frac{b}{d \rho(b)} \sum_{\substack{i\in J\\ i \le j}} t_{m_i}-t_{m_{i-1}}.
\end{multline*}
Applying Abel's summation formula finishes the proof.
\end{proof}

The following lemma will allow us to choose a convenient element from $\Dt$ in the proof of Theorem~\ref{thm:general}. Notice that we assume the space under consideration to be $q$-Banach for some $0<q\le 1$ only to make sure that its quasi-norm is continuous.

\begin{lemma}\label{lemma: max}
Suppose that $\XX$ is a rearrangement invariant atomic quasi-Banach lattice. Assume that $\XX$ is equipped with a $q$-norm for some $0<q\le 1$, and
\[
\lim_{m\in\NN} \frac{\Gamma_m[\XX]}{m}=0.
\]
Let $f\in\XX$ be finitely suported. Then the set
\begin{equation}\label{eq:AveNorm}
\enbrace{\norm{V_\At(f)} \colon \At\in\Dt}
\end{equation}
attains its supremum.
\end{lemma}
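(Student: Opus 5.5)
The plan is a compactness argument. The only data that matter are the finitely many ways the blocks can meet $\supp(f)$ and the cardinalities of the blocks. Set $F=\supp(f)$ and $f=(a_n)_{n=1}^\infty$. Given $\At=(A_j)_{j\in J}\in\Dt$, every block with $A_j\cap F=\emptyset$ contributes nothing to $V_\At(f)$. So we may discard those blocks and assume that each $P_j:=A_j\cap F$ is nonempty. Then $J$ is finite with $\abs{J}\le\abs{F}$, and $(P_j)_{j\in J}$ is one of finitely many families of disjoint nonempty subsets of $F$ (the \emph{pattern} of $\At$). Put $\sigma_j=\sum_{n\in P_j}a_n$ and $n_j=\abs{A_j}\ge\abs{P_j}$. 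Then $V_\At(f)$ is the disjoint sum of the vectors $(\sigma_j/n_j)\chi_{A_j}$. Since $\XX$ is rearrangement invariant, $\norm{V_\At(f)}$ depends only on the pattern and on $(n_j)_{j\in J}$.

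First I check that the supremum $M$ of the set \eqref{eq:AveNorm} is finite. Since the quasi-norm is a $q$-norm and the blocks are disjoint,
\[
\norm{V_\At(f)}^q\le\sum_{j\in J}\abs{\sigma_j}^q\enpar{\frac{\Gamma_{n_j}[\XX]}{n_j}}^q\le \abs{F}\,\norm{f}_{\ell_1}^q\,\sup_m\enpar{\frac{\Gamma_m[\XX]}{m}}^q<\infty.
\]
The last supremum is finite because $\Gamma_m[\XX]/m\to0$.

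Next take a maximizing sequence $\At^{(k)}$ with $\norm{V_{\At^{(k)}}(f)}\to M$. Since there are finitely many patterns, I pass to a subsequence along which the pattern is constant, with index set $J$. By a diagonal argument over the finitely many $j\in J$, I further arrange that for each $j$ either $n_j^{(k)}$ is eventually constant or $n_j^{(k)}\to\infty$. Split $J=J_b\cup J_\infty$ accordingly. Let $\Bt^{(k)}=(A^{(k)}_j)_{j\in J_b}$, which belongs to $\Dt$ (possibly empty), and write
\[
V_{\At^{(k)}}(f)=g_k+h_k,\qquad g_k=V_{\Bt^{(k)}}(f),\qquad h_k=\sum_{j\in J_\infty}\frac{\sigma_j}{n^{(k)}_j}\chi_{A^{(k)}_j}.
\]
For $k$ large the pattern and sizes of $\Bt^{(k)}$ are fixed. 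By rearrangement invariance, $\norm{g_k}$ is then a constant $c$, attained at the element $\Bt^{(k_0)}\in\Dt$. On the other hand, $\norm{h_k}^q\le\sum_{j\in J_\infty}\abs{\sigma_j}^q\bigl(\Gamma_{n_j^{(k)}}[\XX]/n_j^{(k)}\bigr)^q\to0$, by the hypothesis $\Gamma_m[\XX]/m\to0$.

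Finally, the $q$-norm inequality gives $\norm{g_k+h_k}^q\le c^q+\norm{h_k}^q$, and letting $k\to\infty$ yields $M\le c$. Since $c=\norm{V_{\Bt^{(k_0)}}(f)}$ belongs to the set \eqref{eq:AveNorm}, we also have $c\le M$. Hence $M=c$ and the supremum is attained at $\Bt^{(k_0)}$. If $J_b=\emptyset$, then $g_k=0$ and $M=0$, which is attained by the empty family.

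The only step needing care is the one where blocks grow without bound. There I rely on two facts: the $q$-norm (rather than a mere quasi-norm) makes the passage to the limit lossless, and $\Gamma_m[\XX]/m\to0$ makes the escaping blocks negligible. Dropping those blocks again gives an admissible family, because elements of $\Dt$ need not cover $\NN$.
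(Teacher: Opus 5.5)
Your proof is correct. It rests on the same three ingredients as the paper's: there are only finitely many ways the blocks can meet $\supp(f)$, rearrangement invariance reduces everything to block sizes, and $\Gamma_m[\XX]/m\to 0$ together with the $q$-norm makes large blocks harmless. The execution, however, differs.

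The paper first enlarges any $\At$, via lattice monotonicity, to a family $\Bt\ge\Et$ where $\Et$ is a partition of $\supp(f)$; appending $\supp(f)\setminus R(\At)$ only increases $\abs{V_\At(f)}$ pointwise. It then asserts that for each of the finitely many such $\Et$ the block sizes can be capped by some $\Bt(\Et)$ without lowering the supremum, which leaves finitely many candidate values. That capping step is stated tersely. Making it rigorous requires a limiting argument like yours, plus the observation that adding a disjoint block never decreases the norm, so that the supremum over the compactified size parameters is reached at finite sizes.

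Your maximizing-sequence argument sidesteps monotonicity. Instead of capping the escaping blocks you discard them, which is legitimate because members of $\Dt$ need not cover anything, and the $q$-triangle inequality then gives the exact bound $M\le c$. The only trade-off is that your maximizer need not cover $\supp(f)$. It still has at most $\abs{\supp(f)}$ blocks, and nothing more is used when the lemma is invoked in the proof of Theorem~\ref{thm:general}.
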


\begin{proof}
Consider the set
\[
\Dt[f]=\enbrace{\Et \in\Dt \colon R(\Et)=\supp(f)}.
\]

Given $\At\in\Dt$, let $\Bt\in\Dt$ be the family obtained from $\At$ by dropping all entries that do not overlap $\supp(f)$ and, if $\supp(f)\not\subset R(\At)$, appending the entry $\supp(f)\setminus R(\At)$ to $\At$. Since $\abs{V_\At(f)}\le \abs{V_\Bt(f)}$, we have $\norm{V_\At(f)}\le \norm{V_\Bt(f)}$. Moreover, $\Bt\ge \Et$ for some $\Et\in\Dt[f]$. Hence, the supremum value of the set \eqref{eq:AveNorm} is approached through its subset
\[
\enbrace{\norm{V_{\Bt}(f)} \colon \Bt\in\Dt, \, \Et\le \Bt \mbox{ for some } \Et\in\Dt[f]}.
\]

Fix $\Et=(E_j)_{j\in J} \in \Dt[f]$. For each $j\in J$, let $a_j$ be the value of $V_\Et(f)$ on $E_j$. Given $\Bt=(B_j)_{j\in J}\in\Dt$ with $\Bt\ge\Et$,
\[
V_\Bt(f)=\sum_{j\in J} a_j \abs{E_j} \frac{\sum_{n\in B_j} \ee_n}{\abs{B_j}}.
\]
Since $J$ is finite and, by assumption,
\[
\lim_{\abs{B_j}\to \infty} \frac{\sum_{n\in B_j} \ee_n}{\abs{B_j}}=0,
\]
there is $\Bt(\Et)\in\Dt$ with $\Et\le \Bt(\Et)$ such that the supremum value of
\[
\enbrace{\norm{V_{\Bt}(f)} \colon \Bt\in\Dt ,\, \Bt\ge \Et}
\]
is approached through its subset
\[
\enbrace{\norm{V_{\Bt}(f)} \colon \Bt\in\Dt, \, \Et\le\Bt \le\Bt(\Et)}.
\]

Summing up, the supremum value of the set \eqref{eq:AveNorm} is approached through its subset
\[
\enbrace{\norm{V_{\Bt}(f)} \colon \Bt\in\Dt, \, \Et\le \Bt\le \Bt(\Et) \mbox{ for some } \Et\in\Dt[f]}.
\]
Since this set is finite, the proof is over.
\end{proof}

We are now ready for the main result of this section. Note that given a nondecreasing unbounded sequence $\prim$ and $q\in(0,\infty)$, the space $d_q(\prim)$ is a Banach space if and only if $q>1$ and $\prim$ has the URP, or $q=1$ and $\prim^*$ is essentially increasing (see \cite{ABW2023}*{Theorem 3.8}). So, Theorem~\ref{thm:general} answers Question~\ref{qt:LNCAP} in the affirmative.
\begin{theorem}\label{thm:general}
Let $0<q<1$ and $\prim$ be a nondecreasing sequence with the URP and the LRP. Then, there is a constant $C$ such that
\[
\norm{V_\At[d_q(\prim)]}\le C
\]
for all $\At\in\Dt$.
\end{theorem}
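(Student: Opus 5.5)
By Lemma~\ref{lem:change}, I will replace $d_q(\prim)$ by $d(\ww,q)$ with $\ww$ nonincreasing and $t_m=\sum_{n\le m}w_n\approx s_m^q$. Then $\primt$ has the URP and the LRP, and $\norm{\cdot}_{(\ww,q)}$ is a $q$-norm. By \ref{it:positive} it suffices to treat $f\in c_{00}^+$. Since $\Gamma_m\approx s_m$ and the URP gives $s_m/m\to0$ (Lemma~\ref{lem:URPImproved}), Lemma~\ref{lemma: max} applies. So for each $f$ I may fix a family $\At=(A_j)_{j\in J}$ that maximizes $\norm{V_\At(f)}$, and the goal becomes a bound $\norm{V_\At(f)}^q\le C\norm{f}^q$ with $C$ independent of $f$. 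Maximality is the extra structure I plan to exploit. For instance, merging blocks, splitting them, or enlarging a block by adding points outside the support cannot increase the norm. This should force the averages to be comparable to the values of $f$ on the blocks, at least for the ``large'' blocks.

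Write $x_j$ for the average of $f$ on $A_j$ and order $J$ so that $(x_j)$ is nonincreasing. The nonincreasing rearrangement of $V_\At(f)$ then consists of the value $x_j$ repeated $\abs{A_j}$ times, consecutively. Put $m_j=\sum_{i\le j}\abs{A_i}$, so that
\[
\norm{V_\At(f)}^q=\sum_j x_j^q\enpar{t_{m_j}-t_{m_{j-1}}}.
\]
I will fix a large integer $b$ and split $J$ into two classes. A block is \emph{favorable} if many coordinates of $f$ inside $A_j$ are at least a fixed fraction of $x_j$, say at least $\abs{A_j}/b$ of them are $\ge x_j/2$. Otherwise the mass of $f$ on $A_j$ is concentrated on few coordinates, and the block is \emph{unfavorable}.

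For the favorable blocks I will pick subsets $A'_j\subset A_j$ on which $f\ge x_j/2$, with $\abs{A_j}/b\le\abs{A'_j}$. Then $m_j-m_{j-1}=\abs{A_j}$ lies between $\abs{A'_j}$ and $b\abs{A'_j}$. Lemma~\ref{lem:key}, applied with the weights $x_j/2$ and the sets $A'_j$, bounds the favorable contribution by $\frac{b\,\rho(1)}{1}\norm{f}^q$ up to constants. Some regrouping may be needed so that the ratio $b'$ in Lemma~\ref{lem:key} exceeds $1$. If so, I will use a block-halving variant and the URP, which makes $d\rho(b')/b'$ bounded.

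The unfavorable blocks are the main obstacle. Here I plan to use the maximality of $\At$ together with the LRP. If $A_j$ is unfavorable, then $f$ on $A_j$ is dominated by a vector supported on a small set $B_j$ with $\abs{B_j}\le\abs{A_j}/b$, plus a part below $x_j/2$. Replacing $A_j$ by $B_j$, or splitting off the small values, changes the norm in a controlled way. The LRP gives $t_{\abs{A_j}}\ge 2^{\log_b\ldots}t_{\abs{B_j}}$-type growth. Comparing the norm of $x_j\chi_{A_j}$ with that of the average over $B_j$, which is $\ge x_j\abs{A_j}/\abs{B_j}$, the URP shows that averaging over the smaller set $B_j$ yields a strictly larger $q$-norm contribution, by a factor of order $(\abs{A_j}/\abs{B_j})^{q}\,t_{\abs{B_j}}/t_{\abs{A_j}}\gtrsim (b/\rho_t(b))$, which tends to infinity. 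By symmetry \ref{it:perm} and the rearrangement computation above, this should show that the unfavorable contribution is at most $\theta\norm{V_\At(f)}^q$ with $\theta<1$ once $b$ is large, plus a multiple of $\norm{f}^q$. Absorbing it gives
\[
\norm{V_\At(f)}^q\le\frac{C_b}{1-\theta}\norm{f}^q.
\]
Making this comparison rigorous is the delicate step. I would first try an induction on $\abs{\supp f}$, reducing unfavorable blocks to averaging projections of a vector with smaller support.
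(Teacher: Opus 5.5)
Your architecture matches the paper's: reduce to $d(\ww,q)$ with $\ww$ nonincreasing, take $f\ge 0$ finitely supported, fix a maximizing family via Lemma~\ref{lemma: max}, order the blocks by their averages $x_j$, and split according to whether $D_j=\{n\in A_j\colon a_n\ge x_j/2\}$ has at least $\abs{A_j}/b$ elements. Your treatment of the favorable blocks is also the paper's. Lemma~\ref{lem:key} applied with the sets $D_j$, weights $x_j/2$, parameters $1$ and $b$, and $\rho(1)=1$ gives $S\le 2^q b\norm{f}^q$. No regrouping and no URP are needed there.

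The gap is the unfavorable part $R=\sum_{j\in J}x_j^q(t_{m_j}-t_{m_{j-1}})$, which you correctly identify as the crux but leave as a heuristic.

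\begin{itemize}
\item Comparing $\norm{x_j\chi_{A_j}}$ block by block with the average over a small set does not control $R$. In $d(\ww,q)$ a block's contribution depends on where its value sits in the decreasing rearrangement of the whole vector. Invoking maximality one block at a time gives one inequality per block, and their rearrangement shifts do not add up to $R\le\theta(R+S)$.
\item To compare positions uniformly, the competitor blocks must have size a fixed fraction of $\abs{A_j}$. Your small set may be arbitrarily smaller than $\abs{A_j}/b$.
\end{itemize}

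The missing idea is to build a single competitor over all unfavorable indices at once.

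\begin{itemize}
\item Choose $D_j\subset E_j\subset A_j$ with $\abs{E_j}=\lfloor\abs{A_j}/b\rfloor$, padding $D_j$ with small coordinates. Then $b\abs{E_j}\le\abs{A_j}\le 2b\abs{E_j}$.
\item Since every discarded coordinate is $<x_j/2$, you get $x_j\le\frac{2}{1+b}y_j$, where $y_j$ is the average of $f$ on $E_j$.
\item Apply Lemma~\ref{lem:key} not to $f$ but to the vector $V_\Et(f)$, where $\Et=(E_j)_{j\in J}$. Use weights $\frac{1+b}{2}x_j\le y_j$, the original positions $m_j$, and parameters $b$ and $2b$. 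This gives $R\le 2^{1+q}\rho(b)(1+b)^{-q}\norm{V_\Et(f)}^q$.
\item A single use of maximality then yields $R\le\frac12(R+S)$, provided $\rho(b)\le 2^{-2-q}(1+b)^q$. Such $b$ exists because the URP gives $\rho(b)=o(b^q)$.
\end{itemize}

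No induction on $\abs{\supp f}$ is needed. The LRP plays no role at this stage; it enters only through Lemma~\ref{lem:change}, to make $\primt$ concave for Lemma~\ref{lem:key}.

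Two minor slips:
\begin{itemize}
\item The average over $D_j$ is at least $x_j\abs{A_j}/(2\abs{D_j})$, not $x_j\abs{A_j}/\abs{D_j}$.
\item The gain factor is of order $b^q/\rho(b)$, not $b/\rho(b)$.
\end{itemize}
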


\begin{proof}
Use Lemma~\ref{lem:change} to pick a nonincreasing weight $\ww=(w_n)_{n=1}^\infty$ such that $d_q(\prim)=d(\ww,q)$. Let $\rho$ be the multiplicative ratio of the sequence $\primt=(t_m)_{m=1}^\infty$ defined by
\[
t_m=\sum_{n=1}^m w_n.
\]
Since $\primt\approx \prim^q$, $\lim_d \rho(d) d^{-q}=0$ by Lemma~\ref{lem:URPImproved}. Let $b\in\NN$ be such that
\[
\rho(b)\le 2^{-2-q} (1+b)^q.
\]
Pick $f\in c_{00}$. Set $\abs{f}=g=(a_n)_{n=1}^\infty$ and $k:=|\supp(f)|$. Assume that $f\not=0$, so that $k\in \NN$. Since $\prim$ has the URP, $\lim_m t_m/m=0$ by Lemma~\ref{lem:URPImproved}. Hence, we can apply Lemma~\ref{lemma: max} to obtain $\Bt=(B_j)_{j=1}^k \in\Dt$ such that
\begin{equation*}
\norm{V_\At(g)}_{(\ww,q)} \le \norm{V_\Bt(g)}_{(\ww,q)}, \quad \At\in\Dt.
\end{equation*}
Further, by \ref{it:perm}, we may choose $\Bt$ so that the $k$-tuple $(x_j)_{j=1}^k$ defined by
\[
x_j =\frac{1}{\abs{B_j}} \sum_{n\in B_j} a_n
\]
is nonincreasing. Put $m_j=\sum_{i=1}^j \abs{B_i}$ for all $j\in \ZZ\cap[0,k]$. For each $j\in\ZZ\cap[1,k]$ put
\[
D_j=\enbrace{n \in B_j \colon a_n \ge \frac{x_j}{2}}.
\]
Now define
\[
J=\enbrace{j\in\NN\cap[1,k] \colon \abs{D_j}< \frac{\abs{B_j}}{b}},
\]
and $G=\NN\cap[1,k]\setminus J$. For each $j\in J$ choose $D_j \subset E_j \subset B_j$ such that
\[
\abs{E_j}=\enfloor{ \frac{\abs{B_j}}{b} }.
\]
Since $D_j\not=\emptyset$ for all $j\in\ZZ\cap[1,k]$ it follows that $\Et:=(E_j)_{j\in J}\in \Dt$. Set
\[
y_j=\frac{1}{\abs{E_j}} \sum_{n\in E_j} a_n, \quad j\in J.
\]
We have
\[b\abs{E_j} \le \abs{B_j}\]
and
\[
\abs{B_j} x_j =\abs{E_j} y_j+ \sum_{n\in B_j \setminus E_j}a_n
\le \abs{E_j} y_j + \abs{B_j \setminus E_j} \frac{x_j}{2}.
\]
Consequently,
\[
x_j \le \frac{2\abs{E_j}}{2\abs{B_j}-\abs{B_j\setminus E_j}}y_j= \frac{2\abs{E_j}}{\abs{B_j}+\abs{E_j}}y_j
\le \frac{2}{1+b} y_j.
\]
For $j\in \ZZ\cap[0,k]$ put $m_j=\sum_{i=1}^j \abs{B_i}$ and define
\[
R=\sum_{j\in J} x_j^q \enpar{t_{m_j} - t_{m_{j-1}}}, \quad S=\sum_{j\in G} x_j^q \enpar{t_{m_j} - t_{m_{j-1}}},
\]
so that $\norm{V_\Bt(g)}_{(\ww,q)}= (R+S)^{1/q}$. Notice that
\[b\abs{E_j}\le \abs{B_j}\le b(\abs{E_j}+1)\le 2b\abs{E_j},\quad j\in J.
\]
By Lemma~\ref{lem:key},
\begin{align*}
R\le& \frac{2b\rho(b)}{b}\frac{2^q}{(1+b)^q}\norm{V_\Et(g)}_{(\ww,q)}^q \le \frac{1}{2} \norm{V_\Bt(g)}_{(\ww,q)}^q=\frac{R+S}{2}.
\end{align*}
Therefore, $R\le S$, whence $\norm{V_\Bt(g)}_{(\ww,q)} \le 2^{1/q} S^{1/q}$. Since
\[
\abs{D_j} \le m_j-m_{j-1} \le b \abs{D_j}, \quad j\in G,
\]
and $\rho(1)=1$, another application of Lemma~\ref{lem:key} yields
\[
S\le b 2^q \norm{g}_{(\ww,q)}^q = b 2^q \norm{f}_{(\ww,q)}^q.
\]
Pick $\At\in\Dt$. By \ref{it:positive},
\[
\norm{V_\At(f)}_{(\ww,q)}
\le \norm{V_\At(g)}_{(\ww,q)}
\le\norm{V_\Bt(g)}_{(\ww,q)}
\le 2^{1+1/q} b^{1/q} \norm{f}_{(\ww,q)}.\qedhere
\]
\end{proof}

The sequence $\primp_p=(m^{1/p})_{m=1}^\infty$ has the URP and the LRP when $1<p<\infty$. Thus, Theorem~\ref{thm:general} applies in particular to classical Lorentz spaces $\ell_{p,q}$ for $0<q<1<p<\infty$. It also provides an alternative solution to the problem raised by Popa in \cite{Popa1981} of whether $\ell_q$ is a complemented subspace of the Lorentz sequence space $d(\ww,q)$, where $0<q<1$ and $\ww$ is a nonincreasing weight. In a sense, our approach here is more natural than Nawrocki and Orty\'nski's, since it does not require to look for strenuous projections but uses instead canonical projections, once they are guaranteed to be bounded.

\begin{corollary}[cf.\@ Theorem~\ref{thm:NO}]\label{cor:lqCom}
Let $0<q<1$ and $\prim$ be a nondecreasing sequence with both the URP and the LRP. Then $d_q(\prim)$ has a constant-coefficients block basic sequence that is complemented and equivalent to the canonical $\ell_q$-basis.
\end{corollary}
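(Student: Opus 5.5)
Partition $\NN$ into consecutive intervals $B_k$ and use the constant-coefficient blocks
\[
\yy_k=\frac{1}{\Gamma_{\abs{B_k}}}\sum_{n\in B_k}\ee_n,\qquad k\in\NN,
\]
where the lengths $\abs{B_k}$ grow fast enough that the blocks are flat in $\ell_\infty$. Complementation comes from Theorem~\ref{thm:general}: with $\At=(B_k)_{k=1}^\infty$, the averaging projection $V_\At[d_q(\prim)]$ is bounded. Its range is the closed span of $\sum_{n\in B_k}\ee_n$, $k\in\NN$, and it fixes every such vector. So $V_\At$ is a bounded projection onto $[\yy_k\colon k\in\NN]$. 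It remains to show that, for a suitable choice of lengths, $(\yy_k)_{k=1}^\infty$ is equivalent to the canonical $\ell_q$-basis.

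\textbf{Step 1: setting up Popa's theorem.} By Lemma~\ref{lem:change}, $d_q(\prim)=d(\ww,q)$ for some nonincreasing weight $\ww$ with $\sum_{n\le m}w_n\approx s_m^q$. The LRP gives $s_{b^jm}\ge 2^j s_m$, so $\prim$ is unbounded and $\ww$ is nonsummable. By \eqref{eq:FFL}, $\norm{\yy_k}\approx 1$, and $\norm{\yy_k}_\infty=1/\Gamma_{\abs{B_k}}\to0$ as $\abs{B_k}\to\infty$. Theorem~\ref{thm:Popa} then yields a subsequence $(\yy_{k_i})_{i=1}^\infty$ equivalent to the canonical $\ell_q$-basis.

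\textbf{Step 2: passing to the subsequence.} A subsequence is not itself a partition, so I merge the gaps. The vectors $\yy_{k_i}$ are still constant-coefficient blocks on the sets $B_{k_i}$. The family $\At'=(B_{k_i})_{i=1}^\infty$ belongs to $\Dt$, and Theorem~\ref{thm:general} holds for every $\At\in\Dt$, not only for partitions. Hence $V_{\At'}$ is a bounded projection onto $[\yy_{k_i}\colon i\in\NN]$. If one insists on consecutive blocks, this already suffices, since $(\yy_{k_i})_{i=1}^\infty$ is a block basic sequence relative to the unit vector system.

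\textbf{Alternative to Step 1.} A direct check of the upper $\ell_q$ estimate is also available. Since $\ww$ is nonincreasing, $d(\ww,q)$ is $q$-Banach, so $\norm{\sum_k c_k\yy_k}\le(\sum_k\abs{c_k}^q)^{1/q}$. The lower estimate is the substance of Popa's theorem, and I would cite it rather than reprove it.

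The only point needing care is verifying Popa's hypotheses for the weight produced by Lemma~\ref{lem:change}: nonincreasing, nonsummable, with unbounded $(s_m)_{m=1}^\infty$. This follows from the LRP as in Step 1. Boundedness of the projection is then immediate from the main theorem.
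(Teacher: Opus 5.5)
Your proposal is correct and follows the paper's proof, which simply combines Lemma~\ref{lem:change}, Theorem~\ref{thm:Popa} and Theorem~\ref{thm:general} just as you do. You pass to a nonincreasing nonsummable weight, use Popa's theorem to extract a subsequence of flat, normalized, constant-coefficient blocks equivalent to the $\ell_q$-basis, and project onto its closed span with the non-covering averaging projection $V_{\At'}$, $\At'\in\Dt$. One cosmetic point: you rightly do not actually ``merge the gaps'', since that would change the blocks and void Popa's equivalence, so the phrase should be dropped.
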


\begin{proof}
Just combine Theorem~\ref{thm:general}, Lemma~\ref{lem:change} and Theorem~\ref{thm:Popa}.
\end{proof}
% ------------------------------------------------------------------------
\section{Existence of conditional almost greedy bases in Lorentz sequence spaces}\label{sect:TGA}\noindent
% ------------------------------------------------------------------------
The Dilworth--Kalton--Kutzarova (DKK for short) was originally designed to show the existence of almost greedy conditional bases in certain types of Banach spaces. It explicitly shows how to construct such bases from three ingredients. Namely,
\begin{itemize}[leftmargin=*]
\item A semi-normalized Schauder basis $\XB=(\xx_k)_{k=1}^\infty$ of a Banach space $\XX$,
\item A symmetric or, more generally, subsymmetric Schauder basis $\St=(\st_n)_{n=1}^\infty$ of another Banach space $\Sym$, and
\item a partition of $\Jt=(J_k)_{k=1}^\infty$ of $\NN$ with $J_k\not=\emptyset$ and $\max(J_k)<\min(J_{k+1})$ for all $n\in\NN$.
\end{itemize}

Specifically, the Banach space $\YY[\XB,\St,\Jt]$ constructed from the DKK method is the completion of $c_{00}$ with respect to the norm
\begin{multline*}
f=(a_n)_{n=1}^\infty \mapsto \norm{f}_{\XB\St,\Jt}=
\norm{ \sum_{k=1}^\infty \frac{ 1 }{\abs{J_k}} \enpar{ \sum_{n\in J_k} a_n } \norm{\sum_{j\in J_k} \st_j} \xx_k }\\
+\norm{\sum_{n=1}^\infty a_n\, \st_n -
\sum_{k=1}^\infty \frac{1}{\abs{J_k}} \enpar{\sum_{n\in J_k} a_n } \enpar{\sum_{j\in J_k} \st_j }}.
\end{multline*}

Although the DKK method was originally designed to show the existence of almost greedy conditional bases in certain types of Banach spaces, the authors of \cite{AABW2024} adapted the method to prove similar structural results in a more general framework. They considered the case where $\XX$ is a quasi-Banach space and kept the assumption that $\Sym$ be locally convex. Their reason for not trying to make the extension work in case that $\Sym$ lacked local convexity was precisely the fact that the DKK method depends on the boundedness of the averaging projections associated with symmetric (or subsymmetric) bases and increasing partitions, and they did not expect a positive answer to Question~\ref{qt:LNCAP}! In hindsight, \cite{AABW2024} could have been written verbatim by feeding the DKK method with a quasi-Banach space $\Sym$ with a subsymmetric basis $\St$ such that the averaging projections associated with $\St$ and $\Jt$ are bounded. We leave the tedious details for the reader and record a result from \cite{AABW2024} adapted in this direction.

\begin{theorem}[cf.\@ \cite{AABW2024}*{Theorem 4.1}]\label{thm:AABW}
Let $\XX$ be a quasi-Banach space with a Schauder basis $\XB=(\xx_n)_{n=1}^\infty$, and let $\Sym$ be a quasi-Banach space with a subsymmetric basis $\St$. Assume that the averaging projections relative to $\St$ are bounded. Let $\delta\colon[0,\infty) \to [0,\infty)$ be a nondecreasing map such that
\[
\uncr_m[\XB] \gtrsim \delta(m),\quad m\in\NN.
\]
Suppose that $\XX$ is locally $p$-convex, $0<p\le 1$, and that $\St$ is $L$-concave. Let $1 <q\le s\le\infty$ be such that $\St$ satisfies an upper $q$-estimate and a lower $s$-estimate. Let $\varphi\colon[0,\infty) \to [0,\infty)$ be a concave increasing function. Then the space $\XX\oplus\Sym$ has an almost greedy Schauder basis $\YB$ with the following additional properties:
\begin{enumerate}[label=(\roman*),leftmargin=*,widest=iii]
\item $\Gamma_m[\YB]\approx \Gamma_m[\St]$ for $m\in\NN$.
\item $\uncr_{m}[\YB]\gtrsim \delta(\log m)$ for $m\ge 2$.
\end{enumerate}
Further, in the case when $\unc_m[\XB] \approx \uncr_m[\XB]\gtrsim m^{\max\{1/q,1/p-1/s\}}$ for $m\in\NN$ we can choose $\YB$ satisfying
\[
\unc_{m}[\YB] \approx \uncr_{m}[\YB]\approx \delta(\varphi(\log m)), \quad m\in\NN.
\]
\end{theorem}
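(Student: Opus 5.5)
The plan is to run the Dilworth--Kalton--Kutzarova construction of \cite{AABW2024} essentially verbatim, and to mark the single point where local convexity of $\Sym$ was used there. Put $\mathbb{S}$-quasi-norm $\norm{\cdot}$. I would define $\YY=\YY[\XB,\St,\Jt]$ as the completion of $c_{00}$ under
\[
\norm{f}_{\YY}=\norm{\sum_{k}\frac{\sum_{n\in J_k}a_n}{\abs{J_k}}\,\norm{\sum_{j\in J_k}\st_j}\,\xx_k}+\norm{f-V_{\Jt}[\St](f)},
\]
where the second summand is computed in $\Sym$. The first task is to identify $\YY$ with $\XX\oplus\Sym$.

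Consider the map $f\mapsto\bigl(\sum_k \lambda_k\xx_k,\ (I-V_{\Jt})f\bigr)$, where $\lambda_k=\frac{1}{\abs{J_k}}\bigl(\sum_{n\in J_k}a_n\bigr)\Gamma$. Here I need two facts. First, the block basis $\bigl(\Gamma_{\abs{J_k}}^{-1}\sum_{j\in J_k}\st_j\bigr)_k$ should be complemented in $\Sym$ by $V_{\Jt}[\St]$; this follows from the hypothesis that averaging projections are bounded, together with subsymmetry. Second, this block basis should be equivalent to $\XB$ after a suitable choice of $\Jt$. That is not automatic, so I would follow \cite{AABW2024} instead and use the decomposition $\Sym\approx V_{\Jt}(\Sym)\oplus\ker V_{\Jt}$. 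Then $\YY\cong\XX\oplus\ker V_{\Jt}$, and the remaining step is to show $\ker V_{\Jt}\oplus V_{\Jt}(\Sym)\cong\Sym$, together with a Pełczyński decomposition argument, so as to absorb the extra copy. Boundedness of $V_{\Jt}$ is exactly what makes all these quasi-norm equivalences hold with constants.

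The second step transfers the basis properties. The unit vectors of $\YY$ give the basis $\YB$. To prove that $\YB$ is a Schauder basis, I would estimate partial sums: a partial sum truncating inside a block $J_k$ differs from the block truncation by a piece controlled by $\norm{S_{[1,m]}}$ on $\Sym$ and by the averaging operator on a single block. Since $\St$ is subsymmetric and averaging projections are bounded, this piece is bounded. Almost greediness would follow as in the original method from three ingredients: democracy with $\Gamma_m[\YB]\approx\Gamma_m[\St]$; quasi-greediness, obtained via the estimate of greedy sums against averages, where the lattice upper $q$-estimate with $q>1$ and the lower $s$-estimate are used to bound $\norm{\sum_k(\text{average on }J_k)\,\xx_k}$ for greedy-truncated vectors; and the fact that $\XX$ is $p$-convex.

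The growth of $\uncr_m[\YB]$ is then inherited from $\uncr_m[\XB]$ through the lacunary choice $\abs{J_k}\approx 2^{\varphi^{-1}(k)}$ or similar, which yields the $\delta(\log m)$ and $\delta(\varphi(\log m))$ bounds. The main obstacle is the quasi-greedy estimate. In \cite{AABW2024} it relied on convexity estimates for averages in $\Sym$, and I expect every such use to reduce to $\norm{V_{\At}[\St]}\le C$ uniformly for the relevant families $\At$ built from greedy sets intersected with blocks, which is now available from the hypothesis.
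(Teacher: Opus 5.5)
Your plan matches the paper's proof. The paper also reruns the DKK construction of \cite{AABW2024} verbatim and observes that local convexity of $\Sym$ was needed there only to make the averaging projections relative to $\St$ (and increasing partitions $\Jt$) bounded, which is now a hypothesis; it then leaves the details to the reader. One step is misstated: since $\YY\cong\XX\oplus\ker V_{\Jt}$, what you still need is $\XX\oplus\ker V_{\Jt}\cong\XX\oplus\Sym$ (for instance via $\ker V_{\Jt}\cong\Sym$), and that is the part to import from \cite{AABW2024}. The statement $\ker V_{\Jt}\oplus V_{\Jt}(\Sym)\cong\Sym$ that you list as the remaining step is immediate from the boundedness of $V_{\Jt}$.
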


With an eye to applying Theorem~\ref{thm:AABW} in the case when $\Sym$ is a nonlocally convex Lorentz sequence space, we state some results on their lattice structure.

\begin{theorem}[cf.\@ \cite{AlbiacAnsorena2022c}*{Theorem 7.1}]\label{thm:SLConvex}
Let $0<q,r<\infty$ and $\prim$ be a nondecreasing doubling unbounded sequence in $(0,\infty)$.
\begin{enumerate}[label=(\alph*), leftmargin=*]
\item\label{it:LC:a} $d_q(\prim)$ is lattice $r$-convex if and only if $r=q$ and $(\prim^r)^*$ is essentially increasing, or $r<q$ and $\prim^r$ has the URP.
\item\label{it:LC:b} $d_q(\prim)$ is L-convex.
\end{enumerate}
\end{theorem}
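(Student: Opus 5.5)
The plan is to reduce everything, via $p$-convexification, to the characterization of when a Lorentz sequence space is a Banach space quoted just before Theorem~\ref{thm:general}, namely \cite{ABW2023}*{Theorem 3.8}.

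\textbf{Step 1: reduction to local convexity.} For a quasi-Banach lattice $\XX$ and $0<r<\infty$, the remark after the definition of convexification gives that $\XX$ is lattice $r$-convex if and only if $\XX^{(1/r)}$ is lattice $1$-convex. By \eqref{eq:LatticeConvex} with $p=1/r$ we have
\[
\enpar{d_q(\prim)}^{(1/r)}=d_{q/r}(\prim^r).
\]
Hence $d_q(\prim)$ is lattice $r$-convex if and only if $d_{q/r}(\prim^r)$ is lattice $1$-convex. I will then use the standard fact that a quasi-Banach lattice is lattice $1$-convex if and only if it is locally convex, i.e., if and only if it admits an equivalent norm (a lattice norm).
\begin{itemize}
\item One direction is contained in the remark that $r$-convexity implies $\min\{r,1\}$-convexity of the space.
\item For the converse, take the Minkowski functional of the convex hull of the solid unit ball. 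The convex hull of a solid set is again solid, so this functional is a lattice norm equivalent to the quasi-norm, and a lattice norm trivially gives $M_1\le N_1$.
\end{itemize}
This is the step I expect to need the most care, since one must check that solidity survives taking convex hulls and that the resulting norm is equivalent. I would cite Kalton \cite{Kalton1984b} here rather than reprove it.

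\textbf{Step 2: part~\ref{it:LC:a}.} The sequence $\prim^r$ is nondecreasing and unbounded, so \cite{ABW2023}*{Theorem 3.8} applies with index $q/r$ and weight $\prim^r$. It says $d_{q/r}(\prim^r)$ is (isomorphic to) a Banach space if and only if one of the following holds:
\begin{itemize}
\item $q/r>1$ and $\prim^r$ has the URP;
\item $q/r=1$ and $(\prim^r)^*$ is essentially increasing.
\end{itemize}
Translating back gives exactly the two alternatives in~\ref{it:LC:a}: $r<q$ together with the URP of $\prim^r$, or $r=q$ together with essential monotonicity of $(\prim^r)^*$. In particular, $r>q$ is never possible.

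\textbf{Step 3: part~\ref{it:LC:b}.} Since $\prim$ is doubling, Lemma~\ref{lem:URPPower} gives $\alpha>0$ such that $\prim^\alpha$ has the URP. I claim $\prim^r$ has the URP for every $0<r\le\alpha$. Write $\rho_\beta$ for the multiplicative ratio of $\prim^\beta$, so that $\rho_\beta=\rho^\beta$. For $d\ge1$,
\[
\frac{\rho_r(d)}{d}=\enpar{\frac{\rho_\alpha(d)}{d}}^{r/\alpha} d^{r/\alpha-1}\le \enpar{\frac{\rho_\alpha(d)}{d}}^{r/\alpha}.
\]
The right-hand side tends to $0$ by Lemma~\ref{lem:URPImproved}\ref{eq:URP:B}. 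The sequence $\prim^r$ is nondecreasing, so it does not move down steeply, and Lemma~\ref{lem:URPImproved} (equivalence of \ref{eq:URP:B} and \ref{eq:URP:A}) yields that $\prim^r$ has the URP. Choosing $r<\min\{q,\alpha\}$, part~\ref{it:LC:a} shows that $d_q(\prim)$ is lattice $r$-convex, hence L-convex.
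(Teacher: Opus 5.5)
Your proposal is correct and follows the paper's proof. The paper likewise reduces to $r=1$ via \eqref{eq:LatticeConvex}, uses that lattice $1$-convexity is local convexity, and invokes \cite{ABW2023}*{Theorem 3.8} for (a), then deduces (b) from (a) and Lemma~\ref{lem:URPPower}. Your extra details fill in steps the paper leaves implicit: the solidity of the convex hull of the unit ball, and the identity $\rho_r=\rho_\alpha^{r/\alpha}$ showing that the URP passes to smaller powers, so that one can take $r<q$.
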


\begin{proof}
By \eqref{eq:LatticeConvex}, to prove \ref{it:LC:a} it suffices to consider the case where $r=1$. Since lattice $1$-convexity is just local convexity, the result in this particular case follows from \cite{ABW2023}*{Theorem 3.8}. In turn, \ref{it:LC:b} follows from \ref{it:LC:a} and Lemma~\ref{lem:URPPower}.
\end{proof}

As far as we know, lattice concavity of Lorentz sequence spaces is not so well understood. Nevertheless, Kami\'nska--Maligranda's work \cite{KamMal2004} on function Lorentz spaces provides valuable information on the discrete case.

\begin{theorem}
Let $0<q,r<\infty$ and $\prim$ be a nondecreasing doubling sequence in $(0,\infty)$.
\begin{enumerate}[label=(\alph*), leftmargin=*]
\item\label{it:LV:a} If $d_q(\prim)$ is lattice $r$-concave, then $(\prim^r)^*$ is essentially decreasing.
\item\label{it:LV:b} If $r>q$ and there is $s\in(q,r)$ such that $(\prim^s)^*$ is essentially decreasing, then $d_q(\prim)$ is lattice $r$-concave.
\item $d_q(\prim)$ is L-concave if and only if $\prim$ has the LRP.
\end{enumerate}
\end{theorem}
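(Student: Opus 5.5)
Via \eqref{eq:LatticeConvex}, $d_q(\prim)^{(1/r)}$ is $d_{q/r}(\prim^{r})$, so lattice $r$-concavity of $d_q(\prim)$ is lattice $1$-concavity of $d_{q/r}(\prim^r)$. The plan is to reduce all three items to statements about $1$-concavity of a Lorentz sequence space and then to transfer known facts about function Lorentz spaces, as suggested by the reference to Kami\'nska--Maligranda \cite{KamMal2004}. Throughout, the fundamental function \eqref{eq:FFL} gives $\Gamma_m[d_q(\prim)]\approx s_m$.

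For \ref{it:LV:a}, apply $r$-concavity to disjoint unit vectors. Given $n\le m$, split a set of size $m$ into $\lfloor m/n\rfloor$ disjoint sets of size $n$ plus a remainder. Then $r$-concavity gives
\[
\lfloor m/n\rfloor\, s_n^r\lesssim s_m^r .
\]
Hence $s_n^r/n\gtrsim s_m^r/m$ when $n\le m$, i.e. $(\prim^r)^*=(m/s_m^r)$ is essentially decreasing; a lower $r$-estimate already suffices here.

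For \ref{it:LV:b}, I would first establish a lower $s$-estimate for $d_q(\prim)$ from the hypothesis that $(\prim^s)^*$ is essentially decreasing. After $s$-concavification, this is the claim that $d_{q/s}(\prim^s)$ satisfies a lower $1$-estimate when $m/s_m^s$ is essentially decreasing. I would prove it by writing the quasi-norm via \eqref{eq:Lorentzpqw} as $\sum_n a_n^{q/s}(\text{weight})$ and using the Abel summation formula. For disjointly supported $f_j$, this reduces to the layer-cake expression $\|f\|^{q/s}\approx\sum_k (\text{level}_k)\, s^{q}_{\text{size}_k}$. The key input is superadditivity up to constants, $\sum_j s^s_{m_j}\lesssim s^s_{\sum m_j}$, which follows from $s^s_m/m$ essentially decreasing. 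This step is where I expect the main obstacle: with $q/s<1$, the $\ell_{q/s}$-type aggregation must be compared carefully with the sum of norms. I would handle it via distribution functions, as Kami\'nska--Maligranda do. Next, since $d_q(\prim)$ is L-convex by Theorem~\ref{thm:SLConvex}\ref{it:LC:b}, Kalton's theorem \cite{Kalton1984b} upgrades this to lattice $r$-concavity for every $r>s$. A lower $s$-estimate together with L-convexity yields $r$-concavity for $r>s$ by the standard Maurey-type argument, and $r>s>q$ is available.

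For (c), suppose $\prim$ has the LRP. Then $\prim^s$ has the LRP for all $s>0$, and by Lemma~\ref{lem:URPImproved} applied to duals (LRP of $\prim^s$ is URP of $(\prim^s)^*$), $(\prim^s)^*$ is essentially decreasing for large $s$. Concretely, $2s_m\le s_{bm}$ gives $s_m\gtrsim m^{\beta}$ growth in multiplicative form, so $(s_m/m^\beta)$ is essentially increasing, and any $s$ with $s\beta>1$ works. Taking such $s>q$ and $r>s$, part \ref{it:LV:b} yields L-concavity. Conversely, if $d_q(\prim)$ is $r$-concave, part \ref{it:LV:a} gives that $m/s_m^r$ is essentially decreasing, i.e. $s_{bm}^r\gtrsim b\, s_m^r$. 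Choosing $b$ large enough that $b^{1/r}$ beats the constant, we get $s_{bm}\ge 2s_m$, which is the LRP.
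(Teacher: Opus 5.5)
Your proposal is correct, but for (a) and (b) it takes a different route from the paper.

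\textbf{Part (a).} The paper applies $r$-concavity to the overlapping family of indicators of all $m$-subsets of an $n$-set $A$, using $\sum_B\chi_B=\binom{n-1}{m-1}\chi_A$. You instead split into $\lfloor m/n\rfloor$ disjoint $n$-sets, which uses only disjoint families. So your argument shows that a lower $r$-estimate already forces $(\prim^r)^*$ to be essentially decreasing.

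\textbf{Part (b).} The paper does no computation in the sequence space. It views $d_q(\prim)$ as the sublattice of functions constant on each $(n-1,n]$ inside $\Lambda(w,q)$, with $w=\sum_n(s_n^q-s_{n-1}^q)\chi_{(n-1,n]}$, and quotes Kami\'nska--Maligranda's concavity theorem. Your route is self-contained:
\begin{enumerate}
\item prove a lower $s$-estimate;
\item get L-convexity from Theorem~\ref{thm:SLConvex}, which applies because $s_m^s\gtrsim m$ makes $\prim$ unbounded;
\item use the sharp form of Kalton's result. Here you need $r$-concavity for every $r>s$, not just L-concavity. It follows by $p$-concavifying to a Banach lattice and applying Lindenstrauss--Tzafriri: a lower $t$-estimate gives $t'$-concavity for all $t'>t$.
\end{enumerate}
The step you flag as the main obstacle is in fact immediate. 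Set $\theta=q/s\le 1$, $\mu=d(u^\theta)$, and let $\lambda_g$ be the distribution function of $g$. Abel summation gives $\norm{g}_{\theta,\prim^s}\approx\norm{F_g}_{L_\theta(\mu)}$ with $F_g(u)=s^s_{\lambda_g(u)}$. For disjoint $g_j$ one has $\lambda_{\sum_j g_j}=\sum_j\lambda_{g_j}$, so superadditivity gives $\sum_jF_{g_j}\lesssim F_{\sum_j g_j}$ pointwise. The reverse Minkowski inequality $\sum_j\norm{F_j}_{L_\theta}\le\norm{\sum_jF_j}_{L_\theta}$ for nonnegative $F_j$ then finishes.

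What each route buys: yours shows exactly where $q<s$ enters and gives the lower estimate at the endpoint $s$, at the price of importing Kalton's machinery. The paper's citation delivers $r$-concavity in one stroke.

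\textbf{Part (c).} You follow the paper, only more explicitly. The converse comes straight from (a) with $b\ge 2^rC$ rather than through Lemma~\ref{lem:URPImproved}. The forward direction uses that $(s_m/m^\beta)_{m=1}^\infty$ is essentially increasing when $b^\beta=2$, in place of the URP of $\prim^*$.

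\textbf{Two direction slips to fix.} From $\lfloor m/n\rfloor s_n^r\lesssim s_m^r$ you get $s_n^r/n\lesssim s_m^r/m$ for $n\le m$, not $\gtrsim$. Also, the superadditivity $\sum_j s^s_{m_j}\lesssim s^s_{\sum_j m_j}$ follows from $s^s_m/m$ being essentially \emph{increasing}, which is exactly what the hypothesis on $(\prim^s)^*$ says.
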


\begin{proof}
By \eqref{eq:FFL}, there are $C_1$, $C_2\in(0,\infty)$ such that
\[
\frac{1}{C_1} s_{\abs{A}} \le \norm{\sum_{n\in A} \ee_n}\le C_2 s_{\abs{A}}, \quad A\subset\NN, \, \abs{A}<\infty.
\]
Assume that $d_q(\prim)$ is lattice $r$-concave, and let $C$ be the associated concavity constant. Pick $m$, $n\in\NN$ with $m\le n$. Choose $A\subset\NN$ with $\abs{A}=m$. Let $\Pt_m(A)$ be the set consisting of all subsets of $A$ of cardinality $m$. Note that $\abs{\Pt_m(A)}=\binom{n}{m}$. The identity
\[
\sum_{B\in\Pt_m(A)} \sum_{n\in B}\ee_n= \binom{n-1}{m-1} \sum_{n\in A} \ee_n
\]
yields
\[
\frac{1}{C^r C_1^r} \binom{n}{m} s_m^r \le C_2^r \binom{n-1}{m-1} s_n^r.
\]
Since $\binom{n}{m}= \binom{n-1}{m-1} n/m$, \ref{it:LV:a} holds.

Assume $(\prim^s)^*$ is essentially decreasing, where $q<s<r$. Define $w\colon(0, \infty) \to (0,\infty)$ by
\[
w=\sum_{n=1}^\infty \enpar{s_n^q-s_{n-1}^q} \chi_{(n-1,n]}
\]
Let $W$ be the primitive of $w$, that is $W(t)=\int_0^t w(s)\, ds$ for all $t\in(0,\infty)$. Then, the function $V\colon(0,\infty)\to (0,\infty)$ defined by $V(t)=W^{s/q}(t)/t$ is essentially increasing. By \cite{KamMal2004}*{Theorem 6}, the function Lorentz space $\Lambda(w,q)$ is lattice $r$-concave. Since $d_q(\prim)$ is isomorphic to the quasi-Banach lattice consisting of all functions in $\Lambda(w,q)$ that are constant on each interval $(n-1,n]$, $n\in\NN$, \ref{it:LV:b} holds.

Our assumptions imply that $\prim^*$ does not moves down steeply. Assume that $\prim$ has the LRP. Then, $\prim^*$ has the URP. By Lemma~\ref{lem:URPImproved}, there is $0<\alpha_0<1$ such that $(m^{1-\alpha}/s_m)_{m=1}^\infty$ is essentially decreasing for all $\alpha_0<\alpha<1$. By \ref{it:LV:b}, $d_q(\prim)$ is lattice $r$-concave provided that $r>\max\{q,1/(1-\alpha_0)\}$.

Assume that $d_q(\prim)$ is lattice $r$-concave, $1<r<\infty$. By \ref{it:LV:a} and Lemma~\ref{lem:URPImproved}, $\prim^*$ has the URP. Hence, $\prim$ has the LRP.
\end{proof}

We close with an existence result that yields a continuum of mutually non-equivalent conditional (almost greedy) Schauder bases in a broad class of nonlocally convex Lorentz spaces. It applies, in particular, to Lorentz spaces $\ell_{p,q}$ for $0<q<1<p<\infty$.

\begin{theorem}\label{thm:CAGLorentz}
Let $0<q<1$, $\prim$ be a nondecreasing sequence with the URP and the LRP, $\XX$ be a quasi-Banach space with a Schauder basis $\XB$, and $\varphi\colon[0,\infty) \to [0,\infty)$ be a concave increasing function. Assume that $d_q(\prim)$ is isomorphic to a complemented subspace of $\XX$. Then $\XX$ has an almost greedy basis $\YB$ with $\unc_m[\YB]\approx \varphi^{1/q}(\log(m))$ for $m\ge 2$, and $(\Gamma_m[\YB])_{m=1}^\infty \approx \prim$.
\end{theorem}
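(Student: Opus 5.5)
The plan is to derive the result from Theorem~\ref{thm:AABW}, taking $\Sym=d_q(\prim)$ with its canonical basis $\St$. For the quasi-Banach space with a Schauder basis required there, we use a space built from $\XX$ together with a copy of $\ell_q$ whose conditionality we control explicitly.

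\emph{Step 1 (hypotheses on $\St$).} $\St$ is symmetric, so it is subsymmetric. By Theorem~\ref{thm:general}, all averaging projections relative to $\St$ are uniformly bounded. Since $\prim$ has the LRP, the characterization of L-concavity in the preceding theorem shows that $d_q(\prim)$ is L-concave. Moreover, by the Kalton fact recalled in Section~\ref{prelim}, it satisfies a lower $s$-estimate for some $s<\infty$.

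For an upper $q_0$-estimate with some $q_0>1$, I would argue as follows. By Lemma~\ref{lem:URPImproved}, $(s_m/m^\alpha)_{m=1}^\infty$ is essentially decreasing for some $\alpha<1$. Hence $\prim^{r}$ has the URP for $1<r<1/\alpha$. Theorem~\ref{thm:SLConvex}\ref{it:LC:a} then gives lattice $r'$-convexity for every $r'<q$, which is too weak. So the upper estimate has to come from a direct computation for disjoint sums in the Lorentz quasi-norm: the fundamental function $\approx \prim \lesssim (m^{\alpha})$ should give an upper $q_0$-estimate for $1<q_0<1/\alpha$. I expect this to be a routine but somewhat delicate rearrangement computation.

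\emph{Step 2 (decomposition of $\XX$).} Write $\XX\simeq \mathbb{Z}\oplus d_q(\prim)$. Since the canonical basis of $d_q(\prim)$ is symmetric, $d_q(\prim)\simeq d_q(\prim)\oplus d_q(\prim)$. By Corollary~\ref{cor:lqCom} and the same squaring trick, $d_q(\prim)\simeq \ell_q\oplus d_q(\prim)$. Hence
\[
\XX\simeq \XX\oplus\ell_q\oplus d_q(\prim).
\]
In $\ell_q$ take the basis $\mathcal{D}=(\bm{d}_n)_{n=1}^\infty$ given by $\bm{d}_1=\ee_1$ and $\bm{d}_n=\ee_n-\ee_{n-1}$ for $n\ge2$. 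Its coordinate functionals are the tail sums $\sum_{j\ge k}a_j$, which are bounded on $\ell_q\subset\ell_1$. Its partial-sum projections are
\[
P_N(f)=\sum_{n<N}a_n\ee_n+\Bigl(\sum_{n\ge N}a_n\Bigr)\ee_N,
\]
which are uniformly bounded. Thus $\mathcal{D}$ is a semi-normalized Schauder basis. Comparing $\sum_{n\in A}\bm{d}_n$ for a spread-out set $A$ (norm $\approx m^{1/q}$) with a consecutive sum (norm $\approx 1$) gives $\uncr_m[\mathcal{D}]\gtrsim m^{1/q}$. The $q$-triangle inequality gives $\unc_m[\mathcal{D}]\lesssim m^{1/q}$.

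\emph{Step 3 (applying Theorem~\ref{thm:AABW}).} Apply Theorem~\ref{thm:AABW} to the space $\XX\oplus\ell_q$, with basis $\XB\oplus\mathcal{D}$, and to $\Sym=d_q(\prim)$. This produces an almost greedy basis $\YB$ of $\XX\oplus\ell_q\oplus d_q(\prim)\simeq\XX$ with $\Gamma_m[\YB]\approx\Gamma_m[\St]\approx s_m$, by \eqref{eq:FFL}. The unconditionality bound $\unc_m[\YB]\approx\delta(\varphi_1(\log m))$ holds for a suitable concave increasing $\varphi_1$, provided the threshold hypothesis $\unc_m\approx\uncr_m\gtrsim m^{\max\{1/q_0,1/p-1/s\}}$ is met. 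Here $p$ is the convexity index of $\XX\oplus\ell_q$.

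\emph{Main obstacle.} The difficulty is to make $\delta(m)\approx\unc_m[\XB\oplus\mathcal{D}]$ an exact power, so that $\delta\circ\varphi_1$ can be matched with $\varphi^{1/q}$. If $\XB$ itself is more conditional than $m^{1/q}$, the unconditionality parameters of the direct sum lie between $m^{1/q}$ and $m^{1/p}$.

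My first attempt would be to replace $\XB$ by a less conditional basis. I would try to choose the basis of the first summand so that its conditionality is dominated by $m^{1/q}$, for instance by absorbing $\mathbb{Z}$ through a further Pe{\l}czy\'nski-type decomposition. Then $\delta(m)=m^{1/q}$, and I would take $\varphi_1=\varphi$, which gives $\unc_m[\YB]\approx\varphi^{1/q}(\log m)$.

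If instead the conditionality of the first summand is exactly $m^{1/p}$, I would take $\varphi_1=\varphi^{p/q}$. This function is still concave and increasing because $p\le q$, and it yields the same conclusion. I would also need to verify the threshold inequality $\max\{1/q_0,1/p-1/s\}\le 1/q$ (or $\le 1/p$ in the second case), which should follow from $q_0>1>q$ and $s<\infty$.
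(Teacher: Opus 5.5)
Your architecture matches the paper's. You apply Theorem~\ref{thm:AABW} to $\XB\oplus\mathcal{D}$ (the difference basis of $\ell_q$) and the unit vector system of $d_q(\prim)$, with Theorem~\ref{thm:general} supplying the averaging projections and Corollary~\ref{cor:lqCom} giving $\XX\oplus\ell_q\oplus d_q(\prim)\simeq\XX$.

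\textbf{The gap is in Step~1.} The upper $q_0$-estimate with $q_0>1$ that you are after is false, so no rearrangement computation can produce it. By Corollary~\ref{cor:lqCom} (or Theorem~\ref{thm:Popa}), $d_q(\prim)$ contains a semi-normalized disjointly supported sequence $(f_k)_{k=1}^\infty$ equivalent to the unit vector basis of $\ell_q$. Hence $\norm{\sum_{k=1}^N f_k}\approx N^{1/q}$, while an upper $r$-estimate would force this to be $\lesssim N^{1/r}$. So $d_q(\prim)$ satisfies no upper $r$-estimate with $r>q$, let alone with $r>1$. Concretely, normalized flat blocks $\chi_{B_k}/s_{N_k}$ at rapidly increasing scales $N_k$ already add up like the $\ell_q$-basis. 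The fundamental function only controls disjoint sums of vectors with a common distribution, which is why the heuristic $\Gamma_m\lesssim m^{\alpha}$ misleads you. As a result, Step~3 invokes Theorem~\ref{thm:AABW} under a hypothesis that is not available.

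\textbf{The missing idea} is that the estimate you discarded as ``too weak'' is exactly the one the paper uses.
\begin{itemize}
\item Since $\prim^q$ has the URP, $(\prim^q)^*$ is essentially increasing by Lemma~\ref{lem:URPImproved}.
\item Theorem~\ref{thm:SLConvex}\ref{it:LC:a} with $r=q$ then makes $d_q(\prim)$ lattice $q$-convex, which gives an upper $q$-estimate with the Lorentz exponent $q<1$.
\item The paper feeds this exponent into Theorem~\ref{thm:AABW}. The restriction ``$1<q$'' in that displayed statement must therefore be read as allowing exponents below $1$; by the argument above, no nonlocally convex $d_q(\prim)$ could meet it otherwise.
\item The threshold becomes $m^{\max\{1/q,1/p-1/s\}}$, whose leading term is exactly the conditionality $m^{1/q}$ of $\mathcal{D}$. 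This is why $\ell_q$ is the right summand to add.
\item With $\delta(m)=m^{1/q}$ one gets $\delta(\varphi(\log m))=\varphi^{1/q}(\log m)$ directly, with no rescaling of $\varphi$.
\end{itemize}

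\textbf{On your ``main obstacle''.} The paper simply asserts $\unc_m[\XB\oplus\mathcal{D}]\approx\uncr_m[\XB\oplus\mathcal{D}]\approx m^{1/q}$. This is immediate when $\XX$ is $q$-Banach: then $\unc_m[\XB]\lesssim m^{1/q}$ by the same $q$-triangle-inequality argument you use for $\mathcal{D}$, and $1/p-1/s\le 1/q$ holds trivially. So your worry about merely $p$-Banach $\XX$ with $p<q$ points at something the paper does not discuss. Your substitution $\varphi_1=\varphi^{p/q}$ does handle the case $\unc_m\approx\uncr_m\approx m^{1/p}$. However, the proposed ``Pe{\l}czy\'nski-type absorption'' to tame an arbitrary $\XB$ is not yet an argument.
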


\begin{proof}
Let $\Dt$ be the difference basis of $\ell_q$. It is known \cite{AABW2024}*{Propositon 3.2} that $\unc_m[\Dt]\approx \uncr_m[\Dt]\approx m^{1/q}$ for $m\in\NN$. Therefore, $\unc_m[\XB\oplus\Dt]\approx \uncr_m[\XB\oplus\Dt]\approx m^{1/q}$ for $m\in\NN$. In turn, $d_q(\prim)$ satisfies an upper $q$-estimate by Theorem~\ref{thm:SLConvex}. By Theorem~\ref{thm:general}, applying Theorem~\ref{thm:AABW} with $\XB\oplus\Dt$ and the unit vector system of $d_q(\prim)$ yields a Schauder $\YB$ of $\YY:=\XX\oplus\ell_q\oplus d_q(\prim)$ satisfying the whised-for properties. Since, by Corollary~\ref{cor:lqCom}, $\YY$ is isomorphic to $\XX$, the proof is over.
\end{proof}

\bigskip
% ------------------------------------------------------------------------
%\section*{Statements and Declarations}\noindent
% ------------------------------------------------------------------------
\subsection*{Conflict of interest}
The authors have no competing interests to declare that are relevant to the content of this article.

\subsection*{Data Availability}
Data sharing does not apply to this article as no datasets were generated or analysed during the current study.
% ------------------------------------------------------------------------
%\bibliography{Biblio}
%--------------------------------------------
% \bib, bibdiv, biblist are defined by the amsrefs package.

%--------------------------------------
\end{document}